\newif\ifdraft
\draftfalse

\documentclass[reqno,a4paper]{amsart}

\usepackage{mathrsfs,amsmath,amssymb,graphicx}
\usepackage{stmaryrd}
\usepackage[unicode]{hyperref}
\usepackage{latexsym}
\usepackage{layout}
\usepackage[english]{babel}
\usepackage{color}
\usepackage{subcaption}
\usepackage{fancyvrb}
\usepackage{color}
\usepackage{amsmath,amscd}
\usepackage{enumitem}

\usepackage{multirow}

\usepackage{mathtools}
\usepackage{tikz} 


\usetikzlibrary{matrix,arrows,decorations,shapes,calc}
\pdfsuppresswarningpagegroup=1

\tikzset{every picture/.append style={remember picture},
na/.style={baseline=-.5ex}}

\theoremstyle{plain}
\newtheorem{theorem}{Theorem}[section]
\newtheorem{lemma}[theorem]{Lemma}
\newtheorem{proposition}[theorem]{Proposition}

\newtheorem{corollary}[theorem]{Corollary}

\theoremstyle{definition}
\newtheorem{definition}{Definition}[section]
\theoremstyle{remark}
\newtheorem{remark}{Remark}[section]


\newcommand{\nada}[1]   {}

\newcommand{\pair} {(\mathbb S^3,\mathbf{\Sigma})}
\newcommand{\pairprime} {(\mathbb S^3,\mathbf{\Sigma}')}

\newcommand{\Stwo} {\mathfrak S}
\newcommand{\surface}{\mathbf{\Sigma}}

\newcommand{\Sbb} {\mathbb S}
\newcommand{\SSS} {\mathcal S}
\newcommand{\solidpart} {solid part~}

\newcommand{\sphere} {\Sbb^3}
\newcommand{\sphereminussurface} {\sphere\setminus\surface}

\definecolor{mygray}{rgb}{0.92,0.92,0.92}

\newcommand{\Compl}[1]{\overline{\sphere \setminus #1}}

\newcommand{\op}[1]{\operatorname{#1}}
\newcommand{\sd}{\operatorname{sd}}

\newcommand{\HL}{{\rm HL}}
\newcommand{\HK}{{\rm HK}}

\newcommand{\draftGGG}[1]{\ifdraft{\color{red}#1}\fi}
\newcommand{\draftYYY}[1]{\ifdraft{\color{orange}#1}\fi}


\numberwithin{equation}{section}
\numberwithin{figure}{section}


\title{A complete invariant for closed surfaces in the three-sphere}
\author{Giovanni Bellettini}
\address{Dipartimento di Ingegneria dell'Informazione e Scienze Matematiche, Universit\`a di Siena, 53100 Siena, Italy,
and International Centre for Theoretical Physics ICTP,
Mathematics Section, 34151 Trieste, Italy
}
\email{bellettini@diism.unisi.it}
\author{Maurizio Paolini}
\address{Dipartimento di Matematica e Fisica, Universit\`a Cattolica del Sacro Cuore, 25121 Brescia, Italy}
\email{maurizio.paolini@unicatt.it}
\author{Yi-Sheng Wang}
\address{National Center for Theoretical Sciences, Mathematics Division, Taipei 106, Taiwan}
\email{yisheng@ncts.ntu.edu.tw}

\keywords{Surfaces in 3-space, complete invariant, Kneser's conjecture}
\subjclass[2010]{Primary 57M05, 57M27; Secondary 57M25}
\begin{document}

\thanks{}

\begin{abstract}
Associated to an embedded surface in the $3$-sphere, 
we construct a diagram of fundamental groups,
and prove that it is a complete invariant, 
wherefrom we deduce complete invariants of handlebody links,
tunnels of handlebody links, and spatial graphs.
The main ingredients in the proof of the completeness are  
a generalization of the Kneser conjecture for $3$-manifolds with boundary 
proved also here, and extensions of Waldhausen's theorem by Evans, Tucker and Swarup.    
Computable invariants of handlebody links derived therefrom are calculated.  
\end{abstract}

\maketitle

%
%
%

\section{Introduction}\label{sec:intro}
 

While a knot, a handlebody knot of genus one, 
is determined by its complement \cite{GorLue:89},
there are infinitely many inequivalent 
handlebody knots of genus $g>1$ with homeomorphic complements 
\cite{Mot:90}, \cite{LeeLee:12}, \cite[Theorem $4.1$]{Suz:75}. 
However, it is shown in \cite{BePaWa:19} that, 
up to mirror image, 
the ambient isotopy type of a closed, \emph{connected} surface $\Sigma$ in the 
$3$-sphere $\sphere$ can be recovered from  
the span of fundamental groups
\begin{equation}\label{intro:eq:fund_span}
\pi_1(E)\leftarrow \pi_1(\Sigma)\rightarrow \pi_1(F),
\end{equation}
where $E,F$ are the closures of components of $\sphere\setminus\Sigma$.
In particular, if $E$ is a handlebody, the sequence of 
groups
\begin{equation}\label{intro:eq:complete_invariant_hk}
\op{Ker}\big(\pi_1(\Sigma)\rightarrow \pi_1(E)\big)\leqslant
\pi_1(\Sigma)\rightarrow \pi_1(F)
\end{equation}
determines, up to mirror image, 
the ambient isotopy type of the handlebody knot $E\subset \sphere$.
\eqref{intro:eq:complete_invariant_hk} is reminiscent of 
the knot group with peripheral system \cite{Wal:68ii} 
while \eqref{intro:eq:fund_span} bears 
a resemblance of the fact that, up to homeomorphisms, a Heegaard splitting 
$(V,\Sigma,W)$ of a $3$-manifold is determined by the span
\begin{equation}
\pi_1(V)\leftarrow \pi_1(\Sigma)\rightarrow \pi_1(W).
\end{equation}

%
%

The aim of this paper is to generalize 
the invariant \eqref{intro:eq:fund_span}
to closed, {\it not necessarily connected}, surfaces in $\sphere$. 
The situation with non-connected surfaces is more intricate,
and requires  
a generalized Kneser conjecture (Theorem \ref{intro:teo:g_kneser_conjecture}) and extensions 
of the Waldhausen theorem \cite{Eva:72}, \cite{Tuc:73}, \cite{Hem:04}, 
\cite{Swa:80}. A categorical description is also employed 
to simplify the presentation.

Throughout the paper, we work in the piecewise linear category.
Unless otherwise specified, $\sphere$ denotes an oriented $3$-sphere
with a base point $\infty$, and $3$-manifolds are assumed to be connected and compact.
Denote by the pair $\SSS=\pair$
a closed surface $\surface$ in $\sphere\setminus \infty$. 
Two pairs $\SSS = (\sphere,\surface)$, $\SSS'=(\sphere,\surface')$
are equivalent if $\surface$, $\surface'$
are isotopic by a basepoint-preserving 
ambient isotopy, or equivalently, if
there exists an orientation-preserving self-homeomorphism
$f$ sending $\surface$ to $\surface'$ with $\infty$ fixed. 
Suppose $\surface$ consists of 
$n$ 
components $\Sigma_i,i=1,\dots, n$.  
Then the closures of 
components
of the complement
$\sphereminussurface$ are
$n+1$ 
oriented $3$-manifolds $F_j, j=0,\dots, n$\footnote{The case of a connected $\surface$ corresponds to $n=1$, $F=F_0$, and $E=F_1$.}
(see Fig.\ \ref{fig:pair}).   
By convention, we let $\infty\in F_0$, and  
$\Sigma_i$ is so oriented that 
its normal points toward
the side containing $\infty$.
$F_j$ is called a solid part of $\SSS=\pair$.
\begin{figure}[h]
\def\svgwidth{.7\columnwidth}
\begingroup%
  \makeatletter%
  \providecommand\color[2][]{%
    \errmessage{(Inkscape) Color is used for the text in Inkscape, but the package 'color.sty' is not loaded}%
    \renewcommand\color[2][]{}%
  }%
  \providecommand\transparent[1]{%
    \errmessage{(Inkscape) Transparency is used (non-zero) for the text in Inkscape, but the package 'transparent.sty' is not loaded}%
    \renewcommand\transparent[1]{}%
  }%
  \providecommand\rotatebox[2]{#2}%
  \newcommand*\fsize{\dimexpr\f@size pt\relax}%
  \newcommand*\lineheight[1]{\fontsize{\fsize}{#1\fsize}\selectfont}%
  \ifx\svgwidth\undefined%
    \setlength{\unitlength}{1388.97637795bp}%
    \ifx\svgscale\undefined%
      \relax%
    \else%
      \setlength{\unitlength}{\unitlength * \real{\svgscale}}%
    \fi%
  \else%
    \setlength{\unitlength}{\svgwidth}%
  \fi%
  \global\let\svgwidth\undefined%
  \global\let\svgscale\undefined%
  \makeatother%
  \begin{picture}(1,0.44897959)%
    \lineheight{1}%
    \setlength\tabcolsep{0pt}%
    \put(0,0){\includegraphics[width=\unitlength,page=1]{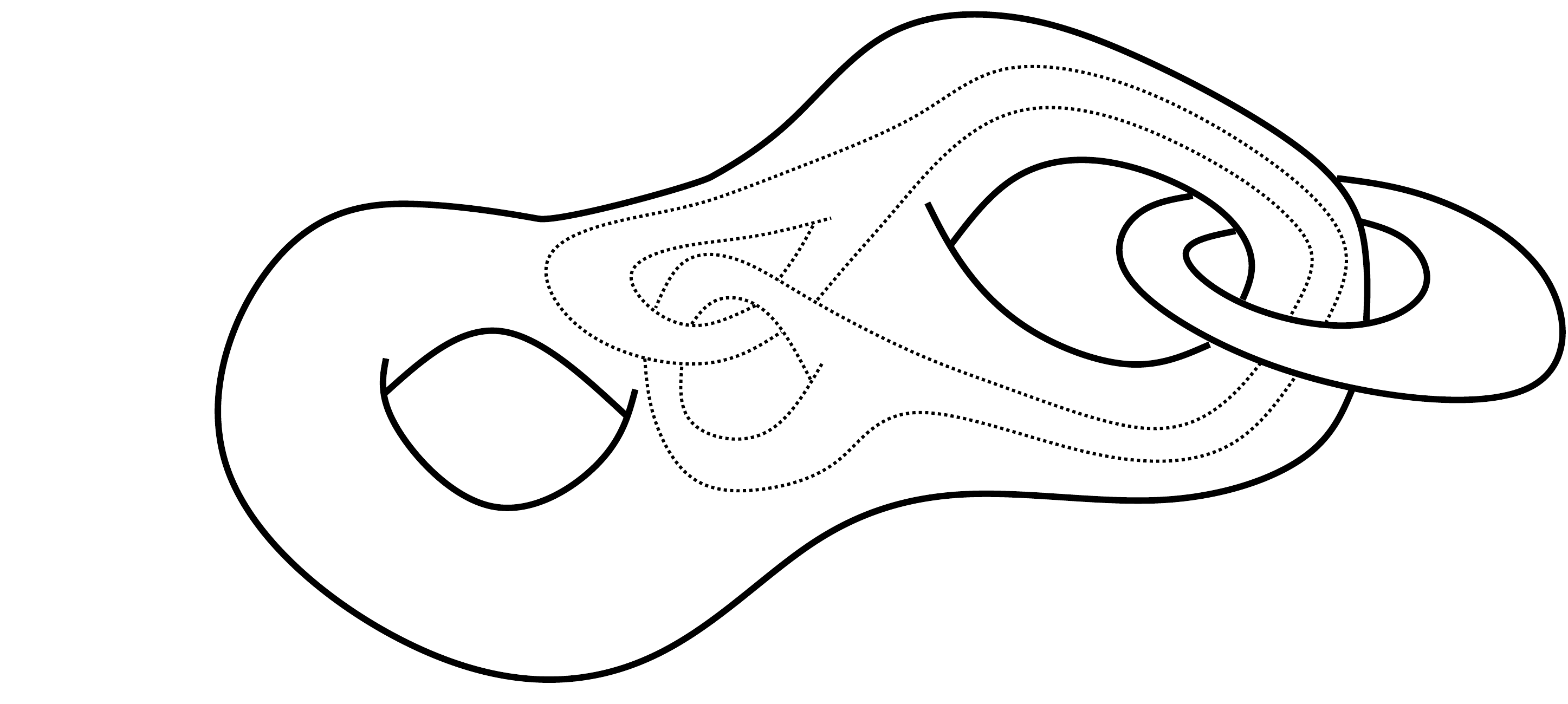}}%
    \put(0.17685758,0.34799459){\color[rgb]{0,0,0}\makebox(0,0)[lt]{\lineheight{1.25}\smash{\begin{tabular}[t]{l}{$F_0$}\end{tabular}}}}%
    \put(0.21933806,0.08158313){\color[rgb]{0,0,0}\makebox(0,0)[lt]{\lineheight{1.25}\smash{\begin{tabular}[t]{l}{$F_2$}\end{tabular}}}}%
    \put(0.89525853,0.21889883){\color[rgb]{0,0,0}\makebox(0,0)[lt]{\lineheight{1.25}\smash{\begin{tabular}[t]{l}{\footnotesize $F_1$}\end{tabular}}}}%
    \put(0.52727861,0.19068447){\color[rgb]{0,0,0}\makebox(0,0)[lt]{\lineheight{1.25}\smash{\begin{tabular}[t]{l}{\tiny $F_3$}\end{tabular}}}}%
    \put(0.0181815,0.03854484){\color[rgb]{0,0,0}\makebox(0,0)[lt]{\lineheight{1.25}\smash{\begin{tabular}[t]{l}$\infty$\end{tabular}}}}%
  \end{picture}%
\endgroup%
  
\caption{An example of $\pair$.}
\label{fig:pair}  
\end{figure}

Each $\Sigma_i$ is the intersection of
exactly two solid parts of $\SSS$, so, thinking of $F_j$ as a node and $\Sigma_i=F_j\cap F_k$
as an edge between the nodes representing $F_j$ and $F_k$, 
we obtain a based tree $\Lambda_\SSS$ with the base node
representing $F_0$ (see \eqref{intro:diag:diag_of_mfds}
for $\Lambda_\SSS$ of Figure \ref{fig:pair}). 
Intuitively, the based tree $\Lambda_\SSS$ measures 
``how far'' from $\infty$ each solid part $F_j$ is, that is, 
how many components of $\surface$ in between $F_j$ and $\infty$. 
Regarding $\Lambda_\SSS$ as an unordered based $1$-dimensional
simplicial complex, we can consider its subdivision
$\sd\Lambda_\SSS$, which comes with a natural partial order on its vertices, 
and hence can be viewed as a based category, a category with a selected base object (e.g. \eqref{intro:diag:diag_of_mfds}).
\begin{equation}\label{intro:diag:diag_of_mfds} 
\begin{tikzpicture}[baseline = (current bounding box.center)]
  \node[draw, circle, fill, label={\tiny $F_1$}, scale=0.3] (F1) at (0,0){};
  \node[draw, star, star points=7, fill, label={\tiny $F_0$}, scale=0.35]  (F0)at (1,0.3){};
  \node[draw, circle, fill, label={\tiny $F_2$}, scale=0.3] (F2)at (2,0) {};
  \node[draw, circle, fill, label={\tiny $F_3$},scale=0.3] (F3)at (3,0) {};
  \draw (F0.center) to node [below]{\tiny $\Sigma_1$}(F1.center);
  \draw (F0.center) to node [below]{\tiny $\Sigma_2$} (F2.center);
  \draw (F2.center) to node [below]{\tiny $\Sigma_3$} (F3.center);
 
  \node[draw, circle, fill, label={\tiny $F_1$}, scale=0.3] (F1) at (5,0){};
  \node[draw, star, star points=7, fill, label={\tiny $F_0$}, scale=0.35]  (F0)at (6,0.3){};
  \node[draw, circle, fill, label={\tiny $F_2$}, scale=0.3] (F2)at (7,0) {};
  \node[draw, circle, fill, label={\tiny $F_3$},scale=0.3] (F3)at (8,0) {};
  
  \node[draw, circle, label=below:{\tiny $\Sigma_1$}, scale=0.2] 
  (S1) at (5.5,0.15){};
  \node[draw, circle, label=below:{\tiny $\Sigma_2$}, scale=0.2]  
  (S2) at (6.5,0.15){};
  \node[draw, circle, label=below:{\tiny $\Sigma_3$}, scale=0.2] 
  (S3)at (7.5,0) {};   
  \draw[->] (S1) to (F0);
  \draw[->] (S1) to (F1);
  \draw[->] (S2) to (F0);
  \draw[->] (S2) to (F2);
  \draw[->] (S3) to (F2);
  \draw[->] (S3) to (F3);
\end{tikzpicture} 
\end{equation}

Taking into account the inclusions 
$\Sigma_i\hookrightarrow F_j$ and $\Sigma_i\hookrightarrow F_k$,
where $\Sigma_i = F_j\cap F_k$, we can think of a pair
$\SSS=\pair$ as a $\sd\Lambda_{\SSS}$-diagram of 
oriented manifolds, that is, a
based functor $\mathcal{MT}(\SSS)$ 
from $\sd\Lambda_\SSS$ to $\mathsf{Mfd}$, the category  
of connected oriented manifolds,
which sends each node $\alpha$ in $\Lambda_\SSS$ to a solid part $F_j$,  
the barycenter $\widehat{\alpha\beta}$ 
of two nodes $\alpha$ and $\beta$ in $\Lambda_{\SSS}$ to the intersection $\Sigma_i=F_j\cap F_k$,   
and each span
\[\alpha\leftarrow \widehat{\alpha\beta}\rightarrow \beta
\quad\text{to}\quad F_j\leftarrow \Sigma_i\rightarrow F_k.\]

Now, it is not difficult to see that two pairs $\SSS=\pair$, $\SSS'=\pairprime$ 
are equivalent if and only if the associated 
diagrams of oriented manifolds $\mathcal{MT}(\SSS), \mathcal{MT}(\SSS')$ are equivalent. 
Here by \textit{two $\op{sd}\Lambda$-diagrams of oriented manifolds are equivalent} we understand
there is an equivalence of based categories
\[\mathcal{E}:\sd\Lambda_{\SSS} \mapsto \sd\Lambda_{\SSS'}\]
and a natural transformation $\Phi_M$ 
between $\mathcal{MT}(\SSS)$ and $\mathcal{MT}(\SSS')\circ \mathcal{E}$
such that $\Phi_M(\bullet)$ is an orientation-preserving (o.p.) homeomorphism for
each node $\bullet\in \sd\Lambda_\SSS$:

\begin{center}
\begin{equation}\label{intro:cat_def:equi_two_pairs} 
\begin{tikzpicture}[scale=.8, every node/.style={transform shape}, baseline = (current bounding box.center)]
\node (Lam)  at (0,3) {$\sd\Lambda_{\SSS}$};
\node (Lam') at (0,0) {$\sd\Lambda_{\SSS'}$};
\node (Mfd) at (5, 1.5) {$\mathsf{Mfd}$}; 
\draw[double,->,thick,>=implies] (2,2) to node[right]{\large $\Phi_M$} (2,1);
\draw[->] (Lam)  to node [right]{  $\mathcal{E}$}(Lam');
\draw[->] (Lam') to [out = 0, in =  -135] node [above, xshift =-.5em] {  $\mathcal{MT}(\SSS')$}(Mfd);
\draw[->] (Lam)  to [out = 0, in =  135] node [above] {  $\mathcal{MT}(\SSS)$}  (Mfd); 
\end{tikzpicture}
\end{equation}
\end{center}

\draftGGG{I do not completely understand who is the 
domain of $\Phi$, it is probably my fault. The important
is that this domain contains $\bullet$}
\draftYYY{$\Phi$ is a natural transformation between two functors
$\mathcal{MT}(\SSS)$ and $\mathcal{MT}(\SSS')$,
and $\bullet$ is a node in $\sd\Lambda$. That is, for each node
$\alpha\in \sd\Lambda$, $\Phi(\alpha)$ is a morphism, in this case
a homeomorphism, between the two objects $\mathcal{MT}(\SSS)(\alpha)$ 
and $\mathcal{MT}(\SSS')(\alpha)$, in this case two manifolds.
}
\draftGGG{thanks! do not remove for a while this explanation,
it is useful for me....}
 
Applying the fundamental group functor to $\mathcal{MT}(\SSS)$
and $\mathcal{MT}(\SSS')$, we get two based diagrams of groups, denoted by
\begin{align*}
\mathcal{FT}^u(\SSS):\sd\Lambda_\SSS &\rightarrow \mathsf{Grp}_f\\
\mathcal{FT}^u(\SSS'):\sd\Lambda_{\SSS'} &\rightarrow \mathsf{Grp}_f,
\end{align*} 
where $\mathsf{Grp}_f$ is the category of finitely generated groups
with homomorphisms modulo conjugation.
The question thus arises as to whether an equivalence between 
the induced based diagrams of groups implies an equivalence between $\SSS$ and $\SSS'$. 
Due to the presence of chiral objects, such
as trefoil knots, an affirmative answer is not expected in general. 
However, if we integrate the orientation information in $\SSS=\pair$
into the functor $\mathcal{FT}^u(\SSS)$---the superscript $u$
standing for \textit{unoriented}, then we obtain a complete invariant
of $\SSS$. More precisely, we consider a new functor (fundamental tree)
$\mathcal{FT}(\SSS)$, which is 
the functor $\mathcal{FT}^u(\SSS)$ decorated 
with an intersection form on the abelianization of  
$\mathcal{FT}^u(\SSS)(\widehat{\alpha\beta})$, 
for each barycenter $\widehat{\alpha\beta}$,
namely a non-degenerate bi-linear map on homology groups
\[I: H_{1}(\Sigma)\times H_1(\Sigma)\rightarrow \mathbb{Z},\]
where $\Sigma$ is the component of $\surface$ 
corresponding to
$\widehat{\alpha\beta}$.    
The completeness of the fundamental tree $\mathcal{FT}(\SSS)$
is the main result of this paper. 
\begin{theorem}\label{intro:main_thm}
Two pairs $\SSS, \SSS'$ are equivalent if and only if 
$\mathcal{FT}(\SSS)$ and $\mathcal{FT}(\SSS')$ are
equivalent in the sense that there exist an equivalence
of based categories
\[\mathcal{E} :\sd\Lambda_\SSS\rightarrow \sd\Lambda_{\SSS'}\]
and a natural isomorphism  
\[\Phi: \mathcal{FT}(\SSS)\mapsto \mathcal{FT}(\SSS')\circ \mathcal{E}\]
such that, for each barycenter $\widehat{\alpha\beta}$ in $\sd\Lambda_\SSS$, 
the isomorphism on homology induced by 
$\Phi(\widehat{\alpha\beta})$ preserves intersection forms.  
\end{theorem}

A handlebody link $\HL$ 
is a disjoint, finite union of handlebodies 
$\{F_1,\cdots,F_n\}$ in $\sphere\setminus\infty$. 
The boundary of $F_i$, $i=1,\cdots,n$, namely 
$\surface=\{\Sigma_i:=\partial F_i\}_{i=1}^n$, 
induces a pair $\SSS=\pair$, and two handlebody links 
are ambient isotopic
if and only if the induced pairs are equivalent.   
Theorem \ref{intro:main_thm} immediately implies the
following complete invariant for handlebody links.

\begin{corollary}\label{intro:invariant_handlebody_links}
Let $\SSS=\pair$, $\SSS'=\pairprime$ be pairs induced by
two handlebody links $\HL,\HL'$ of $n$ components. 
Then $\HL$ and $\HL'$ are equivalent 
if and only if there exists
a permutation $\sigma$ on $\{1,\dots, n\}$
and isomorphisms
$\phi_0$ and $\phi_{i}$, $i=1,\cdots,n$, 
such that the diagram
\begin{center} 
\begin{tikzpicture}[scale=.8, every node/.style={transform shape}, baseline = (current bounding box.center)]
\node (F) at (0,1.5) {$\pi_1(F_0)$};
\node (F') at (4,1.5) {$\pi_1(F_0')$};
\node (bF) at (0,0) {$\pi_1(\Sigma_i)$};
\node (bF') at (4,0) {$\pi_1(\Sigma_{\sigma(i)}')$};

\draw[->] (F) to node [above]{\footnotesize $\phi_{0}$}(F');
\draw[->] (bF) to node [right]{}  (F);
\draw[->] (bF')to node [right]{}(F');
\draw[->] (bF) to node [above]{\footnotesize $\phi_{i}$}(bF');
\end{tikzpicture}
\end{center}
commutes up to conjugation, the subgroups
\[\phi_{i}\big(\operatorname{Ker}(\pi_1(\Sigma_i)\xrightarrow{\iota_{i}} \pi_1(F_i))\big),\quad \text{and} \quad
\operatorname{Ker}(\pi_1(\Sigma_{\sigma(i)}')\xrightarrow{\iota_{\sigma(i)}'} \pi_1(F_{\sigma(i)}'))\]
are conjugate in $\pi_1(\Sigma_{\sigma(i)}')$,
and the induced isomorphism on homology by $\phi_i$
preserves intersection forms. 
\end{corollary}  
The proof of Theorem \ref{intro:main_thm} actually
constructs homeomorphisms from $\Sigma_i$ to $\Sigma'_{\sigma(i)}$
realizing $\phi_i$, $i=1,\cdots, n$.
\nada{   
Recall that a meridian system $\mathbf{D}$ of 
a handlebody $H$ is a set $\{D_1,\cdots, D_n\}$ 
of disjoint, non-parallel, essential disks of $H$ 
such that the closure of each component of the complement 
\[H\setminus\bigcup_{i=1}^{n} \mathfrak{N}(D_i)\]
is a $3$-ball,
where $\mathfrak{N}(-)$ stands for a regular neighborhood.
It is well-known that there is a one-to-one correspondence 
between the set of isotopy classes of meridian systems of $H$
and the set of isotopy classes of spines of $H$ \cite{Joh:91}.
Thus, there is a one-to-one correspondence between
the set of isotopy classes of spatial graphs and 
the set of isotopy classes of handlebody links with 
meridian disk system. By Corollary \ref{intro:invariant_handlebody_links},
and the fact that the equivalence between $\SSS$ and $\SSS'$ constructed in 
the proof of Theorem \ref{intro:main_thm}
restricts to a homeomorphism inducing 
$\phi_{i}:\pi_1(\Sigma_i)\rightarrow \pi_1(\Sigma_{\sigma(i)}')$,
we have the following complete invariant for spatial graphs.
\begin{corollary}\label{intro:invariant_spatial_graph}
Let $\op{G}$, $\op{G}'$ be $n$-component spatial graphs, and 
$\{\SSS,\mathbf{D}\}$ and $\{\SSS',\mathbf{D}'\}$ 
are the induced handlebody links with meridian disk system, where
\begin{align*}
\mathbf{D}&=\{D_{1},\cdots, D_{k}\},\\
\mathbf{D}'&=\{D_{1}',\cdots, D_{l}'\}. 
\end{align*}

Then
$\op{G}$ and $\op{G}'$ are ambient isotopic 
if and only if $k=l$ and there exist 
permutations $\tau$ on $\{1,\cdots, k\}$ and $\sigma$ on $\{1,\dots, n\}$,  and isomorphisms
$\phi_0$ and $\phi_{i}$ such that the diagram
\begin{center} 
\begin{tikzpicture}[scale=.8, every node/.style={transform shape}, baseline = (current bounding box.center)]
\node (F) at (0,1.5) {$\pi_1(F_0)$};
\node (F') at (4,1.5) {$\pi_1(F_0')$};
\node (bF) at (0,0) {$\pi_1(\Sigma_i)$};
\node (bF') at (4,0) {$\pi_1(\Sigma_{\sigma(i)}')$};

\draw[->] (F) to node [above]{\footnotesize $\phi_{0}$}(F');
\draw[->] (bF) to node [right]{}  (F);
\draw[->] (bF')to node [right]{}(F');
\draw[->] (bF) to node [above]{\footnotesize $\phi_{i}$}(bF');
\end{tikzpicture}
\end{center}
commutes up to conjugation, and $\phi_i$
sends the conjugacy class of $\partial D_j$
to the conjugacy class of $\partial D_{\tau(j)}$
when $\partial D_j\subset \Sigma_i$, and 
the isomorphism on homology induced by $\phi_i$
preserves intersection forms.  
\end{corollary}
}

A system of arcs of a handlebody link $\HL$
is a set of disjoint, properly embedded arcs $\mathcal{A}=\{\alpha_1,\cdots,\alpha_k\}$
in $\Compl\HL$. We denote by $\HL^\mathcal{A}$ the 
induced handlebody link $\HL\cup_i\mathfrak{N}(\alpha_i)$,
where $\mathfrak{N}(\alpha_i)$ is a regular neighborhood 
of $\alpha_i$ in $\Compl\HL$, and denote by $D_i^\mathcal{A}\subset \mathfrak{N}(\alpha_i)$ a disk dual to $\alpha_i$. 
Note that $\partial D_i^\mathcal{A}$ determines
an element in $\pi_1(\Compl{\HL^\mathcal{A}})$, up to conjugation and inverse.
The induced set of conjugacy classes 
is denoted by $[\partial D_i^\mathcal{A}]^{\pm 1}$.
Let $\{F_i^\mathcal{A}\}$ be components of $\HL^\mathcal{A}$
and $(\sphere,\surface^\mathcal{A})$ the induced pair.
%
%
Two systems of arcs 
\[\mathcal{A}=\{\alpha_1,\cdots,\alpha_k\}\quad\text{and}\quad
\mathcal{B}=\{\beta_1,\cdots,\beta_k\}\]
are equivalent if there is an
orientation-preserving 
self-homeomorphism of $\sphere$ 
preserving $\HL$ and sending $\mathcal{A}$ to $\mathcal{B}$ (compare with knot tunnels \cite{BoiRosZie:88}, \cite{Kob:99}).
Theorem \ref{intro:main_thm} and Corollary \ref{intro:invariant_handlebody_links} 
imply a complete invariant
of systems of arcs of $\HL$.
\begin{corollary}\label{intro:invariant_arc_system}
Let 
\[\mathcal{A}=\{\alpha_1,\cdots,\alpha_k\}\quad\text{and}\quad
\mathcal{B}=\{\beta_1,\cdots,\beta_k\}\]
be two systems of arcs of $\HL$.
Then $\mathcal{A},\mathcal{B}$ are equivalent if and only if 
$\HL^\mathcal{A},\HL^\mathcal{B}$
have the same number, say $n$, of components, and
there exist permutations $\sigma$ on $\{1,\cdots,n\}$
and $\tau$ on $\{1,\cdots, k\}$ 
and isomorphisms
$\phi_0$ and $\phi_{i}$, $i=1,\cdots,n$, 
such that the diagram
\begin{center} 
\begin{tikzpicture}[scale=.8, every node/.style={transform shape}, baseline = (current bounding box.center)]
\node (F) at (0,1.5) {$\pi_1(F_0^\mathcal{A})$};
\node (F') at (4,1.5) {$\pi_1(F_0^\mathcal{B})$};
\node (bF) at (0,0) {$\pi_1(\Sigma^\mathcal{A}_i)$};
\node (bF') at (4,0) {$\pi_1(\Sigma^\mathcal{B}_{\sigma(i)})$};

\draw[->] (F) to node [above]{\footnotesize $\phi_{0}$}(F');
\draw[->] (bF) to node [right]{}  (F);
\draw[->] (bF')to node [right]{}(F');
\draw[->] (bF) to node [above]{\footnotesize $\phi_i$}(bF');
\end{tikzpicture}
\end{center} 
commutes up to conjugation, and the subgroups
\begin{equation}\label{eq:subgroups_conjugate}
\phi_{i}\big(\operatorname{Ker}(\pi_1(\Sigma_i^{\mathcal{A}})
%
%
\rightarrow
\pi_1(F_i^\mathcal{A}))\big)\quad \text{and} \quad
\operatorname{Ker}(\pi_1(\Sigma_{\sigma(i)}^{\mathcal{B}})
\rightarrow
\pi_1(F_{\sigma(i)}^\mathcal{B}))
\end{equation}
are conjugate in $\pi_1(\Sigma_{\sigma(i)}^\mathcal{B})$
with $\phi_i$ sending $[\partial D_j^\mathcal{A}]^{\pm 1}$
to $[\partial D_{\tau(j)}^\mathcal{B}]^{\pm 1}$ 
whenever $D_j^\mathcal{A}\subset F_i^\mathcal{A}$,
and the isomorphism on homology induced by $\phi_i$
preserves intersection forms. 
\end{corollary}  
If $\HL$ is a finite union of $3$-balls in $\sphere$,
Corollary \ref{intro:invariant_arc_system} gives 
a complete invariant of spatial graphs, up to ambient isotopy \cite{Tan:94}.
On the other hand, if $\mathcal{A},\mathcal{B}$
are tunnels, that is $\partial \HL^\mathcal{A}\subset\sphere$,
$\partial \HL^\mathcal{B}\subset \sphere$
induce Heegaard splittings of $\sphere$ \cite{EudOza:14}, \cite{Mur:19}, 
Corollary \ref{intro:invariant_arc_system} gives 
a complete invariant of systems of tunnels of $\HL$.


Theorem \ref{intro:main_thm} also readily implies a complete invariant
for unbased pairs $\pair_u$, which are 
surfaces in an \textit{unbased} $\sphere$. 
Two unbased pairs $\pair_u$, $\pairprime_u$ 
are equivalent if there exists an o.p.\ 
self-homeomorphism of $\sphere$ sending $\surface$ to $\surface'$.  
 
\begin{corollary}
The unbased pairs $\pair_u$ and $\pairprime_u$
are equivalent if and only if there exist points 
$\ast\in \sphere\setminus \surface$ and $\ast'\in \sphere\setminus \surface'$
such that $\mathcal{FT}(\SSS)$ and $\mathcal{FT}(\SSS')$ are equivalent, where
$\SSS$ and $\SSS'$ are ``based" pairs
induced by $\ast$ and $\ast'$, respectively.
\end{corollary}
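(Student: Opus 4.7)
The plan is to derive the Corollary as a short formal consequence of Theorem~\ref{intro:main_thm}, using the classical fact that every orientation-preserving self-homeomorphism of $\sphere$ is ambient isotopic to the identity. Granted this, the proof reduces to a bookkeeping exercise matching up basepoints on the two sides of the equivalence.

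For the ``only if'' direction, I will start from an o.p.\ self-homeomorphism $h : \sphere \to \sphere$ with $h(\surface) = \surface'$, pick an arbitrary basepoint $\ast \in \sphere \setminus \surface$, and set $\ast' := h(\ast) \in \sphere \setminus \surface'$. Then $h$ is itself a homeomorphism of based pairs from $\SSS = (\sphere, \surface)$ based at $\ast$ to $\SSS' = (\sphere, \surface')$ based at $\ast'$. Since $\mathrm{Homeo}^{+}(\sphere)$ is path-connected, $h$ is ambient isotopic to $\mathrm{id}_{\sphere}$; with a little care one can arrange the isotopy so that it is basepoint-preserving in the sense required by equivalence of based pairs (tracking $\ast$ along the isotopy and absorbing the discrepancy into the identification $\ast \leftrightarrow \ast'$). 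Invoking Theorem~\ref{intro:main_thm} then produces the equivalence $\mathcal{FT}(\SSS) \simeq \mathcal{FT}(\SSS')$.

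For the ``if'' direction, I will assume basepoints $\ast, \ast'$ are chosen so that $\mathcal{FT}(\SSS) \simeq \mathcal{FT}(\SSS')$. Theorem~\ref{intro:main_thm} then produces a basepoint-preserving ambient isotopy $H_t : \sphere \to \sphere$ with $H_0 = \mathrm{id}_{\sphere}$ and $H_1(\surface) = \surface'$. Setting $h := H_1$ yields an o.p.\ self-homeomorphism of $\sphere$ with $h(\surface) = \surface'$, orientation preservation being automatic because orientation is an isotopy invariant; this is precisely the data witnessing equivalence of the unbased pairs $\pair_u$ and $\pairprime_u$. The only step deserving attention is the promotion from a homeomorphism to a basepoint-preserving isotopy in the forward direction, which is a standard use of the connectedness of $\mathrm{Homeo}^{+}(\sphere)$; the substantive content of the Corollary is entirely packaged into Theorem~\ref{intro:main_thm}.
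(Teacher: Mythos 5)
Your proof is correct and is exactly the intended argument: the paper states this corollary without proof as an immediate consequence of Theorem~\ref{intro:main_thm}, using precisely the reduction you describe (pick $\ast$ arbitrarily, set $\ast' = h(\ast)$ for the forward direction; forget the basepoint for the converse), together with the fact, already recorded in the paper, that based equivalence is the same as the existence of a basepoint-preserving o.p.\ self-homeomorphism. The only cosmetic point is that applying Theorem~\ref{intro:main_thm} to pairs based at $\ast$, $\ast'$ rather than at $\infty$ requires the routine identification of the chosen basepoints with $\infty$, which you implicitly perform.
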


\nada{
There are inequivalent pairs that are 
equivalent as unbased pairs. For instance,
the boundary of a toric shell $T \subset \mathbb{R}^3\subset \sphere$
and the boundary of a tubular neighborhood of a Hopf link
$H\subset \mathbb{R}^3\subset \sphere$
are equivalent as embeddings in an unbased $\sphere$
but not in a based $\sphere$.
}

One key ingredient of the proof of Theorem \ref{intro:main_thm}  
is a generalized Kneser conjecture (Theorem  \ref{intro:teo:g_kneser_conjecture}). Here we adopt the convention
that, given a connected subspace $X$ of a space $Y$, 
the notation $\pi_1(X)\xrightarrow{\gamma} \pi_1(Y)$
denotes the homomorphism induced by an arc $\gamma$ 
connecting the base point of $X$
to the base point of $Y$. When there is no need to specify 
the arc, it is dropped from the notation.
\begin{theorem}\label{intro:teo:g_kneser_conjecture}
Let $M$ be a $3$-manifold 
and $\surface_1$, $\surface_2$  
be disjoint closed surfaces on $\partial M$ 
with $\surface_1\cup \surface_2$ containing all 
non-spherical components of $\partial M$. 
Suppose there exists an isomorphism 
\[\pi_1(M)\xrightarrow{\phi} A_1\ast A_2,\]
where $A_1\ast A_2$ is the free product of two groups 
$A_1$, $A_2$, 
such that the composition
\begin{equation}\label{eq:homo_bdry_to_M}
\pi_1(\Sigma_{ij})\xrightarrow{\delta_{ij}}  
\pi_1(M)\xrightarrow{\phi} A_1\ast A_2
\end{equation} 
factors through $A_i\hookrightarrow A_1\ast A_2$
by a homomorphism $\phi_{ij}:\pi_1(\Sigma_{ij})\rightarrow A_i$,
for every component $\Sigma_{ij}$ in $\surface_i$.
Then $M$ is a connected sum   
\[ 
M\simeq M_1\# M_2  \quad with  \quad\partial M_i=\surface_i\cup \{\text{some $2$-spheres}\},
\] 
and there exist $\delta_i,\epsilon_{ij}$, and an isomorphism $\phi_i:\pi_1(M_i)\rightarrow A_i$, $i=1,2$,
such that  
the diagram   
\begin{equation}\label{diag:diag_in_g_Kneser_lemma}
\begin{tikzpicture}[scale=.8, every node/.style={transform shape}, baseline = (current bounding box.center)]
\node (M) at (0,1.5) {$\pi_1(M)$};
\node (Mi) at (0,0) {$\pi_1(M_i)$};
\node (A)  at (4,1.5) {$A_1\ast A_2$};
\node (Ai) at (4,0) {$A_i$};
\node (S) at (0,-1.5) {$\pi_1(\Sigma_{ij})$};

\draw [->] (M) to node [below]{$\sim$} node [above]{$\phi$}(A);
\draw [->] (Mi) to node [below]{$\sim$} node [above]{$\phi_i$}(Ai);
\draw [->] (Mi) to node [right]{$\delta_i$}(M);
\draw [->] (Ai) to (A);
\draw [->] (S) to  node [right]{$\epsilon_{ij}$}(Mi);
\draw [->] (S) to [out=160, in=200] node [left]{$\delta_{ij}$}(M);
\draw [->] (S) to [out=0,in=-90] node [above]{$\phi_{ij}$}(Ai);
\end{tikzpicture} 
\end{equation}
commutes, for every $j$.
\end{theorem} 
The Kneser conjecture, as stated in \cite{Hei:71}, 
\cite[Theorem $7.1$]{Hem:04},
asserts that any 
$\partial$-irreducible 
$3$-manifold $M$ having a free product fundamental group 
is a connected sum that respects the free product. 
In general, $\partial$-irreducibility
cannot be dropped (see \cite[p.974]{Jac:69}).
Theorem \ref{intro:teo:g_kneser_conjecture}  
replaces $\partial$-irreducibility with a
mild algebraic condition, which is, in fact, also a necessary condition, 
and it holds automatically in the $\partial$-irreducible case (see Remark \ref{rmk:classic_Kneser_conjecture}). 
 

The paper is organized as follows: Basic definitions and conventions
are given in Section \ref{sec:preli}. 
Section \ref{sec:graft_fund} discusses 
geometric and algebraic graft decompositions which
allow us to reduce a pair $\SSS=\pair$ to ``irreducible'' ones.  
Section \ref{sec:threemfd}
proves Theorem \ref{intro:teo:g_kneser_conjecture}, 
the generalized Kneser
conjecture, and Section \ref{sec:proof}
is occupied by the proof of 
Theorem \ref{intro:main_thm}. 
Examples are computed in Section \ref{sec:examples}.

\section*{Acknowledgments}
The first author acknowledges from the support of INDAM/GNAMPA, and ICTP 
International Centre for Theoretical Physics (Trieste),
and the third author is supported by National Center for Theoretical Sciences.

\section{Preliminaries}\label{sec:preli}
 
Here we review some definitions introduced in the introduction
and fix some convention. 
We denote by $\simeq$ an equivalence of pairs, 
and by $\mathsf{Sur}$ 
the set of equivalence classes of pairs.
When there is no danger of confusion, 
$\simeq$ is also used to denote 
other equivalences, such as 
homeomorphisms, isomorphisms, etc.
Bold font $\mathbf{X}$ is reserved for possibly non-connected spaces,
while normal font $X$ stands for a component of $\mathbf{X}$.

Recall that, given a pair $\SSS=\pair$ with
$\surface=\{\Sigma_1,\cdots, \Sigma_n\}$ and
solid parts $\{\infty\in F_0,\cdots,F_n\}$,
the based tree $\Lambda_\SSS$ is given by 
thinking of each $F_j$, $j=0,\dots,n$,
as nodes and $\Sigma_i=F_j\cap F_k$ as edges connecting
nodes representing $F_j$ and $F_k$.

\begin{definition}[\textbf{Depth tree}]
The based tree $\Lambda_\SSS$ 
is called the depth tree of $\SSS=\pair$, and 
a \solidpart $F_j$ is said to have depth $k$ 
if the node in $\Lambda_\SSS$
representing $F_j$ is connected to the base node by $k$ edges; in
particular, $F_0$ has depth $0$.
\end{definition}

\begin{definition}[\textbf{Barycentric diagram}]
Given a based finite graph $\Gamma$, 
the associated barycentric diagram 
$\operatorname{sd}\Gamma$ is a based diagram obtained by
replacing each edge $j-k$ by the span $j\leftarrow \hat{jk}\rightarrow k$. 
\end{definition}

In other words, $\operatorname{sd}\Gamma$ is 
the barycentric subdivision of
$\Gamma$ with $\hat{jk}$ being the barycenter of the edge $j-k$.
Being a diagram, $\operatorname{sd}\Gamma$ can also be viewed as a small category.

\begin{definition}[\textbf{Equivalence}]
Two based finite graphs $\Gamma$ and $\Gamma'$ are equivalent if
there exists an equivalence of based categories
\[\mathcal{E}:\sd\Gamma\rightarrow \sd\Gamma'.\] 
\end{definition}
The above definition is equivalent to saying that $\Gamma$ and $\Gamma'$
are isomorphic as based graphs; for our purpose however, 
it is more
convenient to adopt the categorical definition.
Note that equivalent pairs have equivalent 
depth trees. 
The notion of depth tree comes in handy 
when we discuss the graft decomposition of $\pair$. 

\begin{definition}[\textbf{Barycentric diagram in a category}]\label{def:bary_diag_in_C}
Given a based finite graph $\Gamma$ and a category $\mathsf{C}$,
a based $\sd \Gamma$-diagram in $\mathsf{C}$, or 
a based barycentric diagram in $\mathsf{C}$ 
of type $\sd \Gamma$,
is a functor $\mathcal{F}$ from $\sd \Gamma$ to $\mathsf{C}$. 
\end{definition}

\begin{definition}[\textbf{Equivalence}]
Let $\mathcal{F}$ be a based $\sd \Gamma$-diagram in $\mathsf{C}$
and $\mathcal{F}'$ 
a $($based$)$ $\sd \Gamma'$-diagram in $\mathsf{C}$.
Then $\mathcal{F}$ and  
$\mathcal{F}'$ 
are equivalent if there exists 
an equivalence of based categories
\[\mathcal{E}:\sd \Gamma\rightarrow \sd\Gamma'\]
and a natural isomorphism 
\[\Phi:\mathcal{F}\Rightarrow \mathcal{F}'\circ \mathcal{E}.\]  
The set of equivalence classes of barycentric diagrams in $\mathsf{C}$ 
is denoted by $\mathsf{C}^{\mathtt{BD}}$.
\end{definition}

The $\sd\Lambda_{\SSS}$-diagram 
$\mathcal{MT}(\SSS)$ associated to a pair $\SSS=\pair$
in the category $\mathsf{Mfd}$ of oriented compact manifolds
and continuous maps is an example of barycentric diagram.  
As explained in the introduction, 
the functor $\mathcal{MT}(\bullet)$ induces an injective mapping
\[\mathcal{MT}:\mathsf{Sur}\rightarrow \mathsf{Mfd}^{\mathtt{BD}}.\]  
Composing $\mathcal{MT}$ with the fundamental group functor $\pi_1(\bullet)$
gives a $\sd \Lambda_{\SSS}$-diagram $\mathcal{FT}^{u}(\bullet)$ 
in the category of finitely-generated groups $\mathsf{Grp}_f$ 
with homomorphisms modulo conjugation. 
The orientation information is lost in the passage,
and the induced mapping 
\[\mathcal{FT}^u:\mathsf{Sur}\rightarrow \mathsf{Grp}^{\mathtt{BD}}_f\] 
is no longer injective.

\section{Graft decomposition}\label{sec:graft_fund}

\subsection{Geometric graft decomposition}
\begin{definition}[\textbf{Non-splitting sphere}]\label{def:splitting_sphere}
Given a pair $\pair$, a $2$-sphere $\Stwo$ 
with $\Stwo\cap \big(\surface\cup\{\infty\}\big)=\emptyset$,
is non-splitting with respect
to (w.r.t.) $\pair$ if there exists a $3$-ball $B$ bounded by $\Stwo$
with $B\cap \surface=\emptyset$, and is splitting
w.r.t.\ $\pair$ otherwise. 
\end{definition}
 
Note that if $\Stwo$ is in a solid 
part $F$ of $\pair$, then the $3$-ball $B$ 
must also be in $F$. 
If $\infty\in F$, we isotopy 
$B$ in $F$ such that
$\infty\notin B$.

\begin{definition}[\textbf{Splittable/non-splittable pair}]\label{def:split_pair}
A pair $\pair$ is splittable if it admits a splitting sphere; 
otherwise, it is non-splittable.
\end{definition}

%
\nada{
Given two \emph{unbased} pairs $\pair$, $\pairprime$, 
we can construct a new unbased pair by the following gluing operation:
Select two \textit{non-splitting} spheres $\Stwo\subset F$ and $\Stwo'\subset F'$,
where $F$ (resp.\ $F'$) is a \solidpart of $\pair$
(resp.\ $\pairprime$). 
Remove the $3$-balls $B,B'$ bounded by $\Stwo$ and $\Stwo'$ that contain
no components of $\surface$ and $\surface'$, respectively. Then 
glue $\overline{\sphere\setminus B}$ and $\overline{\sphere\setminus B'}$
via an orientation-reversing homeomorphism between their boundaries. 
The resulting unbased pair $(\sphere, \surface\coprod \surface')$ is necessarily splittable.

\begin{lemma}[\textbf{Independence}]\label{lemma:diff_ways_to_glue}
The above gluing operation does not depend on 
the choice of non-splitting spheres in the solid parts $F$ and $F'$. 
\end{lemma}
\begin{proof}
Suppose $\Stwo_1$ and $\Stwo_2$ are two non-splitting spheres
of $\pair$ in $F$. By the innermost circle argument, 
we may assume $\Stwo_1\cap \Stwo_2=\emptyset$.
Let $B_1$ and $B_2$ be $3$-balls in $F$ bounded by $\Stwo_1$ and $\Stwo_2$, respectively. Then either $B_1\cap B_2=\emptyset$ 
or one $3$-ball contains the other.
In the former case, it is clear that $B_1$ and $B_2$ 
are isotopic. For the latter case, 
we use the regular neighborhood theorem \cite[Theorem $3.8$]{RouSan:82}.
The same argument
applies to non-splitting spheres of $\pairprime$ in $F'$. 
The corollary then follows from the fact 
that gluing along isotopic non-splitting
$2$-spheres results in equivalent pairs.   
\end{proof}
The gluing operation gives the following grafting operation
for \emph{based} pairs.
}
The following gluing operation produces split pairs.
\begin{definition}[\textbf{Grafting}]
By grafting a pair $\SSS'=\pairprime$ onto another pair $\SSS=\pair$ at
a \solidpart $F_i$ of $\pair$ we understand 
gluing $\sphere\setminus B'$ and $\sphere\setminus B$
via an orientation-reversing homeomorphism,
where $B'\subset F_0'\setminus \{\infty\}$
and $B\subset F_i$,
with the base point being the one of $\SSS$.  
The resulting pair is denoted by
\[
\pair\overset{F_i}{\dashleftarrow}\pairprime \qquad \text{or simply} \qquad \SSS\overset{F_i}{\dashleftarrow} \SSS'.
\] 
A pair $\SSS=\pair$ is said to be obtained by performing grafting operations 
finitely many times if $\SSS$ is equivalent to   
\begin{equation}\label{eq:gra_decom}  
\SSS_1
\overset{F_{i_1}^{(1)}}{\dashleftarrow}\SSS_2
\overset{F_{i_2}^{(2)}}{\dashleftarrow}\cdots\overset{F_{i_{k-1}}^{(k-1)}}{\dashleftarrow}\SSS_k,
\end{equation} 
\[\hspace*{-4em}\text{where $F_{i_j}^{(j)}$ is a solid part of  } \SSS_1\overset{F_{i_1}^{(1)}}{\dashleftarrow}\SSS_2
\overset{F_{i_2}^{(2)}}{\dashleftarrow}\cdots\overset{F_{i_{j-1}}^{(j-1)}}{\dashleftarrow}\SSS_j, j=2,\dots, k-1.
\]  
\eqref{eq:gra_decom} is called a graft decomposition of $\SSS$
of length $k$. 
If $\SSS_j$ is non-splittable for each $j$,
then \eqref{eq:gra_decom} is a non-splittable graft decomposition of $\SSS$.
\end{definition}
It is not difficult to see that the grafting operation
depends only on $\pair,\pairprime$, and the solid part of $\pair$ 
to be grafted on. We drop $F_{i_j}^{(j)}$ in \eqref{eq:gra_decom} from the notation when on which solid parts it is grafted is irrelevant.

\begin{proposition}[\textbf{Non-splittable graft
decomposition}]\label{prop:nonsplit_graft_decomp}
Every pair $\SSS$ admits a non-splittable graft decomposition. 
Furthermore, if  
\begin{equation}\label{eq:non_split_graft_decomps}
\SSS_1\dashleftarrow \SSS_2\dashleftarrow\dots
\dashleftarrow \SSS_m \quad {and} \quad 
\SSS_1'\dashleftarrow \SSS_2'\dashleftarrow\dots
\dashleftarrow  \SSS_p'  
\end{equation} 
are two non-splittable graft decompositions of $\SSS$, 
then $m=p$, and after reindexing if necessary, 
$\SSS_i$ and $\SSS_i'$ are equivalent, for every $i$.
\end{proposition}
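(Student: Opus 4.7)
The plan is to prove existence by induction on the number of components of $\surface$, and uniqueness by a Kneser--Milnor style argument on maximal systems of separating spheres.

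For existence, I would induct on $n = |\pi_0(\surface)|$. If $\SSS$ is non-splittable, take $k=1$. Otherwise pick a separating sphere $\Stwo \subset \sphere \setminus (\surface \cup \{\infty\})$; it lies in some \solidpart $F_i$ of $\SSS$ and bounds two $3$-balls $B_0 \ni \infty$ and $B_1$, each meeting $\surface$. Cap off the ball $B_1$ in $\SSS$ to produce $\SSS'$ (carrying $B_0 \cap \surface$ and the original basepoint $\infty$), and symmetrically cap off $B_0$, equipping the new cap with a basepoint, to produce $\SSS''$ (carrying $B_1 \cap \surface$). Both pairs have strictly fewer surface components, and Corollary \ref{coro:diff_ways_to_glue} yields $\SSS \simeq \SSS' \dashleftarrow \SSS''$ at the solid part of $\SSS'$ created by the cap. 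Applying the inductive hypothesis to each factor and concatenating gives the required non-splittable graft decomposition.

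For uniqueness, the key step is to reformulate the combinatorics geometrically. A graft decomposition of length $k$ of $\SSS$ corresponds to a family $\mathcal{A} = \{\Stwo_1, \dots, \Stwo_{k-1}\}$ of pairwise disjoint, pairwise non-parallel separating spheres in $\sphere \setminus (\surface \cup \{\infty\})$: capping off the components of $\sphere \setminus \bigcup_i \Stwo_i$ recovers the factors $\SSS_j$ up to equivalence, and conversely. Under this correspondence the decomposition is non-splittable precisely when $\mathcal{A}$ is \emph{maximal}, i.e.\ every separating sphere disjoint from $\mathcal{A}$ is parallel (rel $\surface$) to some $\Stwo_i$. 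Given two such maximal families $\mathcal{A}$ and $\mathcal{B}$, put them in general position in $\sphere \setminus \surface$ and minimize $|\mathcal{A} \cap \mathcal{B}|$. An innermost intersection circle bounds a disk $D$ on some sphere of $\mathcal{B}$ and a disk $D'$ on some sphere of $\mathcal{A}$, both sitting inside the same solid part of $\SSS$; the $2$-sphere $D \cup D'$ bounds a $3$-ball in that solid part (arranged to miss $\infty$ by a preliminary isotopy), and an isotopy of the $\mathcal{B}$-sphere across this ball lowers $|\mathcal{A} \cap \mathcal{B}|$, contradicting minimality. Hence $\mathcal{A}$ and $\mathcal{B}$ may be assumed disjoint, and then maximality of both families forces a bijection $\mathcal{A} \to \mathcal{B}$ under which paired spheres are isotopic in $\sphere \setminus \surface$; the induced multisets of factors therefore coincide up to equivalence, so $m = p$ and $\SSS_i \simeq \SSS_i'$ after reindexing.

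The main obstacle will be executing the innermost-disk swap in the presence of the surface $\surface$ and the basepoint $\infty$: one must check that the $3$-ball furnished by $D \cup D'$ lies inside a single solid part (so the isotopy never crosses $\surface$) and that $\infty$ can be moved out of the way. Both reduce to the fact that every sphere in $\mathcal{A} \cup \mathcal{B}$ is disjoint from $\surface$ and hence entirely contained in one solid part of $\SSS$, together with the usual basepoint-preserving refinement of the classical $2$-sphere isotopy argument in a $3$-manifold.
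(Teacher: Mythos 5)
Your existence argument is sound and follows essentially the same induction as the paper's (the paper just makes a canonical choice of splitting sphere --- one cutting a prime summand off a deepest reducible solid part --- which it then reuses for uniqueness, whereas you split along an arbitrary separating sphere and concatenate).

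The uniqueness argument, however, has a genuine gap, and it is located exactly where you put the weight: in the claimed dictionary between non-splittable graft decompositions and \emph{maximal} sphere systems, and in the final assertion that two disjoint maximal systems are matched by a parallelism bijection. Consider $\surface = T_1 \cup T_2 \cup T_3$, three standardly embedded, unlinked tori contained in disjoint balls $B_1, B_2, B_3 \subset \sphere \setminus \infty$. The systems $\mathcal{A}=\{\partial B_1,\partial B_2\}$ and $\mathcal{B}=\{\partial B_2,\partial B_3\}$ are disjoint, and each cuts $\SSS$ into the three non-splittable pieces $(\sphere,T_i)$; yet $\partial B_1$ is parallel (in $\sphere\setminus\surface$) to no sphere of $\mathcal{B}$, since parallel separating spheres induce the same partition of the components of $\surface$ and $\partial B_1,\partial B_2,\partial B_3$ induce three different partitions. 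So no bijection by parallelism exists, even though the multisets of factors agree. The same example shows that the sphere system of a non-splittable graft decomposition is \emph{not} maximal in your sense ($\partial B_3$ is disjoint from $\mathcal{A}$, separating, and parallel to no member of $\mathcal{A}$), while genuinely maximal families acquire complementary regions containing no component of $\surface$ (e.g.\ one must add a sphere enclosing $B_1\cup B_2$), which do not correspond to factors of a graft decomposition. A secondary issue is the innermost-disk swap: the sphere $D\cup D'$ is disjoint from $\surface$ but need not bound a ball disjoint from $\surface$ (it can itself be a separating sphere of the pair), so one cannot always isotope across it; one must argue, using non-splittability of the complementary pieces of $\mathcal{A}$ and a choice between $D'$ and $D''$, that a good swap exists. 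The paper sidesteps all of this by exhibiting a factor $\overline{\SSS}$ (cut off from a reducible solid part of maximal depth) that must occur, up to equivalence, as a factor of \emph{every} non-splittable graft decomposition, and then inducting on the number of solid parts; to repair your route you would need to replace ``isotopic sphere systems'' by an argument that identifies factors directly, e.g.\ by matching, for each piece of $\mathcal{A}$, the unique piece of $\mathcal{B}$ containing the same components of $\surface$.
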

\begin{proof}
First we index the solid parts $F_j$, $j=0,\dots,n$, of
$\SSS=\pair$ in such a way that $j>i$ if 
$F_j$ has a greater depth than $F_i$.
We shall prove the existence and uniqueness by induction on $n$.
 
\textbf{Existence:} 
Consider the set 
\begin{equation}\label{eq:nonirre_comp_of_greatest_depth}
\{j\mid F_j \text{ is not irreducible }\}.
\end{equation}
If the set \eqref{eq:nonirre_comp_of_greatest_depth} is empty, 
for instance, when $n=0$ and $n=1$, then 
$\SSS$ is non-splittable, and there is nothing to prove. 

Suppose \eqref{eq:nonirre_comp_of_greatest_depth} is non-empty, and 
let $k$ be the maximum of \eqref{eq:nonirre_comp_of_greatest_depth}.
Since $F_k$ is reducible, there exists a $3$-ball $B$ in 
$\sphere\setminus \infty$
with $\partial B\subset  F_k$, 
$\partial B\cap \partial F_k=\emptyset$, 
and $\widehat{B\cap F_k}$ a prime $3$-manifold,
where $\widehat{B\cap F_k}$ is obtained by capping off 
the spherical component $\partial B$ of $\partial(B\cap F_k)$ 
with a $3$-ball.
 
Ignoring components of $\surface$ that are in $B$,
we obtain a new pair $\tilde{\SSS}$ with less solid parts.
By the induction hypothesis $\tilde{\SSS}$ 
admits a non-splittable graft decomposition: 
\[\tilde{\SSS}\simeq \SSS_1\dashleftarrow\cdots \dashleftarrow \SSS_q.\]
On the other hand, considering only
components of $\surface$ that are in $B$, we get
another pair $\overline{\SSS}=(\sphere, \overline{\surface})$, 
which is non-splittable.
Since $\SSS$ can be obtained by grafting $\overline{\SSS}$ onto
$\tilde{\SSS}$ at the solid part $F_k\cup B$ of $\tilde{\SSS}$, 
we have the non-splittable graft decomposition of $\SSS$:
\[\SSS
\simeq \SSS_1\dashleftarrow\cdots \dashleftarrow \SSS_q
\dashleftarrow \overline{\SSS}.\]

\textbf{Uniqueness:} 
Let $k$, $\tilde{\SSS}$, and $\overline{\SSS}$ 
be as above. Then we observe that     
$\overline{\SSS}$ can be identified with one of the  
$\SSS_i$, $i=1\dots m$ (resp.\ $\SSS_i'$, $i=1\dots p$) 
in \eqref{eq:non_split_graft_decomps}. 
Up to reindexing, we may assume 
\[\overline{\SSS}\simeq \SSS_m\simeq \SSS_p',\] 
and thus, \eqref{eq:non_split_graft_decomps}  
induces two non-splittable graft decompositions
of $\tilde{\SSS}$: 
\[
\SSS_1\dashleftarrow\dots
\dashleftarrow \SSS_{m-1}\quad and \quad 
\SSS_1'\dashleftarrow\dots
\dashleftarrow  \SSS_{p-1}'. 
\]
But $\tilde{\SSS}$ has fewer solid parts than
$\SSS$, so by the induction hypothesis, 
$m-1=p-1$, and after reindexing if necessary,
$\SSS_j\simeq \SSS_j'$ 
for $j=1\dots m-1$.  
\end{proof}

\noindent
\subsection{A detour: edge-labeled trees and trivial pairs.}
A based edge-labeled tree is a based tree with a non-negative integer
assigned to each edge. Given a pair $\SSS=\pair$, we label each edge of
the depth tree $\Lambda_\SSS$
by assigning to each edge the genus of the 
component of $\surface$ the edge represents. 
The resulting tree is called edge-labeled depth tree and 
denoted by $\Lambda_{\SSS}^\ast$.

\begin{definition}[\textbf{Isomorphism of edge-labeled trees}]
Two based edge-labeled trees $\Lambda_1^\ast$ and $\Lambda_2^\ast$ 
are isomorphic if there exists an isomorphism of based trees
between $\Lambda_1^\ast$ and $\Lambda_2^\ast$  
such that the labels of the corresponding edges are identical.
\end{definition}

\begin{definition}
A pair $\SSS$ has the type of a based edge-labeled tree $\Lambda^\ast$
if its edge-labeled depth tree $\Lambda_{\SSS}^\ast$ is isomorphic to
$\Lambda^\ast$.
\end{definition}

The simplest based non-degenerate tree is a based $1$-simplex;
a based $1$-simplex with a label $g$ is denoted by $\Lambda_0^g$. 
A pair $\pair$ of type $\Lambda_0^g$ is a connected surface 
$\Sigma=\surface$ of genus $g$
embedded in $\sphere$.

\begin{definition}[\textbf{Trivial pair}]
A pair $\SSS=(\sphere,\Sigma)$ of type $\Lambda_0^g$ is trivial 
if $\Sigma$ is trivially embedded in $\sphere$
(i.e.\ $\Sigma$ induces a Heegaard splitting of $\sphere$.).
A pair $\SSS=\pair$ of type $\Lambda^\ast$ is trivial if 
each $\SSS_i=(\sphere,\Sigma_i)$ in
the non-splittable decomposition of $\SSS$ 
\[\SSS\simeq \SSS_1\dashleftarrow\dots\dashleftarrow\SSS_n,\]
is a trivial pair of type $\Lambda_0^{g_i}$, 
where $g_i$ is the genus of $\Sigma_i$.
\end{definition}

\begin{proposition}[\textbf{Equivalence of trivial pairs}]
If $\Lambda^\ast$ is a based edge-labeled tree,
then any two trivial pairs $\SSS$, $\SSS'$ of type $\Lambda^\ast$
are equivalent.
\end{proposition}
\begin{proof}
We prove a slightly stronger statement: given an isomorphism 
$\mathcal{E}$ (resp.\ $\mathcal{E}'$) between
$\Lambda_\SSS^\ast$ and $\Lambda^\ast$ 
(resp.\ $\Lambda^\ast_{\SSS'}$ and $\Lambda^\ast$), 
the equivalence between $\SSS$ and $\SSS'$
can be chosen to respect $\mathcal{E}^{-1}\circ \mathcal{E}'$.

We prove it by induction on the number of nodes in $\Lambda^\ast$.
If $\Lambda^\ast$ has only one node, then 
$\surface$ is empty and the assertion
holds trivially. 

Suppose the statement is true for any
based edge-labeled tree with less than $m>1$ nodes and
$\Lambda^\ast$ has $m$ nodes. 
Then there exists a component $\Sigma_i=F_j\cap F_k$ of $\surface$ 
such that the solid part $F_k$ does not contain the base point and
any other component of $\surface$. 
Let $\Sigma_i'$, $F_j'$ and $F_k'$ be 
the component of $\surface'$ and solid parts of $\pairprime$ 
corresponding 
to $\widehat{jk}$, $j$, and $k$ 
via the equivalences $\mathcal{E}^{-1}\circ \mathcal{E}'$. 
Suppose   
\[
\SSS\simeq \SSS_1\dashleftarrow \SSS_2\dashleftarrow\dots
 \dashleftarrow \SSS_m\quad and \quad
\SSS'\simeq \SSS_1'\dashleftarrow \SSS_2'\dashleftarrow\dots
\dashleftarrow  \SSS_m'
\]
are the non-splittable graft decompositions of $\SSS$ and $\SSS'$; 
it may be assumed that $\SSS_m=(\sphere,\Sigma_i)$ 
and $\SSS_m=(\sphere,\Sigma_i')$. In other words, 
$\SSS$ and $\SSS'$ can be obtained 
by grafting $\SSS_m$ and $\SSS_{m}'$ onto 
\[
\tilde{\SSS}:=\SSS_1\dashleftarrow \SSS_2\dashleftarrow\dots
 \dashleftarrow \SSS_{m-1}\quad and \quad
\tilde{\SSS}':=\SSS_1'\dashleftarrow \SSS_2'\dashleftarrow\dots
\dashleftarrow  \SSS_{m-1}'
\]
at the solid parts of $\tilde{\SSS}$ and $\tilde{\SSS}'$ 
containing $F_j\cup F_k$ and $F_j'\cup F_k'$, respectively. 
Since $\SSS_m$ and $\SSS_m'$ are trivial pairs of type $\Lambda_0^g$, 
$\SSS_m$ and $\SSS_m'$ are equivalent by  
\cite{Wal:68i}. On the other hand,
$\tilde{\SSS}$ and $\tilde{\SSS}'$ is of type $\tilde{\Lambda}^\ast$,
which has one node less than $\Lambda^\ast$, 
and the isomorphism $\mathcal{E}$
(resp.\ $\mathcal{E}'$) induces 
an isomorphism between $\Lambda_{\tilde{\SSS}}^\ast$
and $\tilde{\Lambda}^\ast$ (resp.\ $\Lambda_{\tilde{\SSS}'}^\ast$ 
and $\tilde{\Lambda}^\ast$). By the induction hypothesis, 
there is an equivalence between
$\tilde{\SSS}$ and $\tilde{\SSS}'$ sending the solid part containing 
$F_j\cup F_k$ to the solid part containing $F_j'\cup F_k'$.  
Gluing this equivalence and the equivalence between 
$\SSS_m$ and $\SSS_m'$ together, we get an equivalence between
$\SSS$ and $\SSS'$. 
\end{proof}

\subsection{Algebraic graft decomposition}
Here, we consider a special case of barycentric diagrams
with $\mathsf{C}=\mathsf{Grp}_f$ (Definition \ref{def:bary_diag_in_C}).
Let $\Gamma$ be a based graph.
 
\begin{definition}[\textbf{Barycentric diagrams in $\mathsf{Grp}_f$ with pairing}]\label{def:bary_diagram_of_groups_w_pairing} 
A $\sd\Gamma$-diagram in $\mathsf{Grp}_f$ with pairing is 
a $\operatorname{sd}\Gamma$-diagram $\mathcal{G}$ in $\mathsf{Grp}_f$ 
together with a non-degenerate pairing
\begin{equation}\label{eq:pairing}
I: V_{\alpha\beta}\times V_{\alpha\beta}\rightarrow \mathbb{Z},
\end{equation} 
for every barycenter $\widehat{\alpha\beta}$ in $\operatorname{sd}\Gamma$,
where $V_{\alpha\beta}$ is the free abelian group given by
the free part of the abelianization of $\mathcal{G}(\widehat{\alpha\beta})$.
\end{definition}

\begin{definition}[\textbf{equivalence} $\simeq$ ]
Given a $\sd\Gamma$-diagram $\mathcal{G}$ with pairing and 
a $\sd\Gamma'$-diagram $\mathcal{G}'$ with pairing, 
we say that $\mathcal{G},\mathcal{G}'$ are equivalent if there exists a based equivalence 
$\mathcal{E}:\sd\Gamma\rightarrow \sd\Gamma'$ and a natural isomorphism
$\Phi:\mathcal{G}\Rightarrow \mathcal{G}'\circ \mathcal{E}$
such that $\Phi(\widehat{\alpha\beta})$ preserves 
pairings \eqref{eq:pairing}, for every barycenter $\widehat{\alpha\beta}$.
\end{definition}

We refer to a $\sd \Gamma$-diagram in $\mathsf{Grp}_f$ with pairing 
as a barycentric diagram in $\mathsf{Grp}_f$ with pairing 
when the type $\sd\Gamma$ is irrelevant in the discussion.
The set of equivalence classes of all barycentric diagrams in $\mathsf{Grp}_f$
with pairing is denoted by $\mathsf{Grp}_{f,p}^{\mathtt{BD}}$.

\begin{definition}[\textbf{Join of two finite graphs}] 
Let $\Gamma$ and $\Gamma'$ be two based finite graphs   
with base nodes $\ast$ and $\ast'$, respectively, 
and $i$ be a selected node in $\Gamma $. 
The join $\Gamma \vee_{i}\Gamma'$  
is a based graph
obtained by identifying $\ast'\in \Gamma'$ with $i\in \Gamma$
with $\ast$ the base node.  
\end{definition}

The barycentric subdivision of
$\Gamma \vee_{i}\Gamma'$   
can be identified with a pushout of 
\[\sd\Gamma \xleftarrow{i} \mathbf{1} \xrightarrow{\ast'} \sd\Gamma',\]
where $\mathbf{1}=\{1\}$ is the trivial category, and $i$ and $\ast'$
are functors sending $1$ to $i\in \Gamma$ and $1$ 
to $\ast'\in \Gamma'$, respectively.

\begin{definition}[\textbf{Grafting a barycentric diagram to another}]
Suppose $\mathcal{G}:\operatorname{sd}\Gamma \rightarrow \mathsf{Grp}_f$
and $\mathcal{G}':\operatorname{sd}\Gamma'\rightarrow \mathsf{Grp}_f$ 
are two barycentric diagrams in $\mathsf{Grp}_f$ with pairing. 
Then the barycentric diagram in $\mathsf{Grp}_f$ with pairing 
obtained by
grafting $\mathcal{G}'$ onto $\mathcal{G}$ at $\mathcal{G}(i)$ is 
the functor 
\[\mathcal{G}\overset{i}{\dashleftarrow}\mathcal{G}': 
\operatorname{sd}(\Gamma \vee_{i}\Gamma')\rightarrow  \mathsf{Grp}_f\] 
given by the assignment:
\begin{align*}
v & \mapsto \mathcal{G}(v) & v \in \sd \Gamma \setminus \{i\}\\
w &\mapsto \mathcal{G}'(w) & w \in \sd \Gamma'\setminus\{\ast'\}\\
u &\mapsto \mathcal{G}(i)\ast \mathcal{G}'(\ast')& u=[\ast']=[i],
\end{align*} 
where by $A\ast B$ we understand the free product of two
groups $A$ and $B$. The pairing of $\mathcal{G}\overset{i}{\dashleftarrow}\mathcal{G}'$ is the one inherited from $\mathcal{G},\mathcal{G}'$.  
\end{definition}

\begin{definition}[\textbf{Algebraic graft decomposition}]
Let  
$\mathcal{G}:\sd\Gamma \rightarrow\mathsf{Grp}_f$
be a barycentric diagram in $\mathsf{Grp}_f$ with pairing. 
Suppose 
$\Gamma=\Gamma_1\vee_{i_1}\Gamma_2\vee_{i_2}\dots\vee_{i_{n-1}}\Gamma_n$ 
and 
\begin{equation}\label{eq:alg_graft_decomp}
\mathcal{G}\simeq \mathcal{G}_1\overset{i_1}{\dashleftarrow} \mathcal{G}_2
\overset{i_2}{\dashleftarrow}\dots \overset{i_{n-1}}{\dashleftarrow}\mathcal{G}_n,
\end{equation}
where $i_k\in\Gamma_1\vee_{i_1}\Gamma_2\vee_{i_2}\dots\vee_{i_{k-1}}\Gamma_k$ and $\mathcal{G}_k:\sd\Gamma_k\rightarrow \mathsf{Grp}_f$, 
is a barycentric diagram in $\mathsf{Grp}_f$ with pairing, $k=1,\cdots, n$. 
Then \eqref{eq:alg_graft_decomp} is called a graft decomposition of $\mathcal{G}$.
\end{definition}

Given a pair $\SSS=\pair$, 
we orient a component $\Sigma_i$ of $\surface$
such that 
its normal vectors point toward the
component of $\sphere\setminus \Sigma_i$ containing $\infty$.
The orientation induces an intersection form on $H_1(\Sigma_i)$,
the abelianization of $\mathcal{FT}^u(\SSS)(\widehat{jk})$,
where $\Sigma_i=F_j\cap F_k$.
  
\begin{definition}[\textbf{Fundamental tree}]
The fundamental tree $\mathcal{FT}(\SSS)$ of a pair $\SSS=\pair$ 
is a barycentric diagram in $\mathsf{Grp}_f$ with pairing
given by the functor $\mathcal{FT}^u(\SSS)$ 
together with the intersection form on
$H_1(\Sigma_i)$ for each $i$. 
\end{definition}

The fundamental tree induces a mapping 
\[\mathcal{FT}:\mathsf{Sur}\rightarrow \mathsf{Grp}_{f,p}^{\mathtt{BD}},\]
and Theorem \ref{intro:main_thm} asserts that $\mathcal{FT}(\SSS)$
is a complete invariant of $\SSS$, that is, 
the mapping $\mathcal{FT}$ is injective.

\section{Generalized Kneser's conjecture}\label{sec:threemfd}
\begin{proof}[Proof of Theorem \ref{intro:teo:g_kneser_conjecture}]
Note first if one of $A_1,A_2$, say $A_1$, is trivial,
then $\surface_1$ must be empty. 
This can be seen from the exact sequence
\begin{equation}\label{eq:non-trivial:exact_seq_for_Sigma_M}
H_2(M,\partial M;\mathbb{Z}_2)\xrightarrow{\partial} H_1(\partial M;\mathbb{Z}_2)\rightarrow H_1(M;\mathbb{Z}_2).
\end{equation}
By \eqref{eq:non-trivial:exact_seq_for_Sigma_M}, 
any two loops in 
the image of $\partial$ have null intersection number.
So, if $\surface_1\neq\emptyset$, then   
the induced homomorphism
\[H_1(\surface_{1};\mathbb{Z}_2)\rightarrow H_1(M;\mathbb{Z}_2)\]
is non-trivial, and hence the homomorphism
\[\pi_1(\Sigma_{1j})\xrightarrow{\delta_{1j}} \pi_1(M)\]
is non-trivial, for every $j$, contradicting 
the triviality of $A_1$. 
If $\surface_1=\emptyset$, the theorem follows trivially.

In the case where both $A_1$ and $A_2$ are non-trivial,
we divide the proof into three parts, and
assume that $M$ contains no spherical components. 
The connected sum decomposition of $M$
is constructed in Step $1$, $\partial M_i=\surface_i$, $i=1,2$,
is verified in Step $2$, 
and the commutative diagram \eqref{diag:diag_in_g_Kneser_lemma} 
is proved in Step $3$.

\textbf{Step $1$: connected sum decomposition of $M$}. 
Following the methods in \cite{Sta:65}, 
\cite[Lemma]{Hei:71}, and \cite[Chap.\ $7$]{Hem:04}, we consider two 
aspherical $\op{CW}$-complexes $K_1$ and $K_2$ with 
$\pi_{1}(K_i)\simeq A_i$.
Connecting $K_1$ and $K_2$ to an interval $I=[0,1]$ by
gluing $0,1\in I$ to base points of $K_1, K_2$, respectively,
we obtain a new $\op{CW}$-complex $K$. 
Let $\ast:=\frac{1}{2}\in I$ be the base point of $K$. 
Then there is an obvious isomorphism
$\pi_1(K)\simeq A_1\ast A_2$,
and hence $\phi$ can be viewed as an isomorphism
from $\pi_1(M)$ to $\pi_1(K)$. 
Since $K$ is aspherical, the isomorphism
can be realized by a map $h:M\rightarrow K$.

By \cite[Lemma $1.1$]{Wal:67}, we may assume $i)$ $h$ 
is transverse to $\ast$ (i.e.\ $h^{-1}(I')$ 
has a product structure $h^{-1}(\ast)\times I'$ 
on which $h$ restricts to the projection onto $I'$, where $I'=[\frac{1}{4},\frac{3}{4}]\subset I$),
and $ii)$ $h^{-1}(\ast)$ consists of incompressible surfaces. 
Since $h_\ast=\phi$ is an isomorphism, 
a component in $h^{-1}(\ast)$
is either a disk or a $2$-sphere. Let $(n_d,n_s)$
denote the numbers of disks and spheres in $h^{-1}(\ast)$. 
We define a linear order $\preceq$ on the set of 
pairs of non-negative integers 
by declaring $(a,b)\preceq (c,d)$ if either $a<c$ or $b<d$ when $a=c$. 
We assume $h$ is so chosen that it satisfies conditions $i)$ and $ii)$ 
and minimizes $(n_d,n_s)$. 
\textit{We shall see $(n_d,n_s)$ of $h$ is $(0,1)$, 
and the $2$-sphere $h^{-1}(\ast)$ induces
the required connected sum.}

\textbf{Disks:}
Observe first that the set of boundary circles 
$\{\partial D_k\}_{k=1}^{n_d}$ of disks in $h^{-1}(\ast)$    
separates $\partial M$ 
into several components, and crossing a boundary $\partial D_k$  
means going from one component to the other.   
Thinking of each $\partial D_k$ as 
an edge and the closure of each component of the complement
\[\partial M\setminus \bigcup_{k=1}^{n_d} \partial D_k\] 
as a node, we obtain a graph $\mathbf{G}$ (Fig.\ \ref{fig:separating_disks_graph_G}). 
\begin{figure}[ht]
\begin{subfigure}{0.5\textwidth}
\centering
\def\svgwidth{0.95\columnwidth}
\begingroup%
  \makeatletter%
  \providecommand\color[2][]{%
    \errmessage{(Inkscape) Color is used for the text in Inkscape, but the package 'color.sty' is not loaded}%
    \renewcommand\color[2][]{}%
  }%
  \providecommand\transparent[1]{%
    \errmessage{(Inkscape) Transparency is used (non-zero) for the text in Inkscape, but the package 'transparent.sty' is not loaded}%
    \renewcommand\transparent[1]{}%
  }%
  \providecommand\rotatebox[2]{#2}%
  \newcommand*\fsize{\dimexpr\f@size pt\relax}%
  \newcommand*\lineheight[1]{\fontsize{\fsize}{#1\fsize}\selectfont}%
  \ifx\svgwidth\undefined%
    \setlength{\unitlength}{1559.05511811bp}%
    \ifx\svgscale\undefined%
      \relax%
    \else%
      \setlength{\unitlength}{\unitlength * \real{\svgscale}}%
    \fi%
  \else%
    \setlength{\unitlength}{\svgwidth}%
  \fi%
  \global\let\svgwidth\undefined%
  \global\let\svgscale\undefined%
  \makeatother%
  \begin{picture}(1,0.45454545)%
    \lineheight{1}%
    \setlength\tabcolsep{0pt}%
    \put(0,0){\includegraphics[width=\unitlength,page=1]{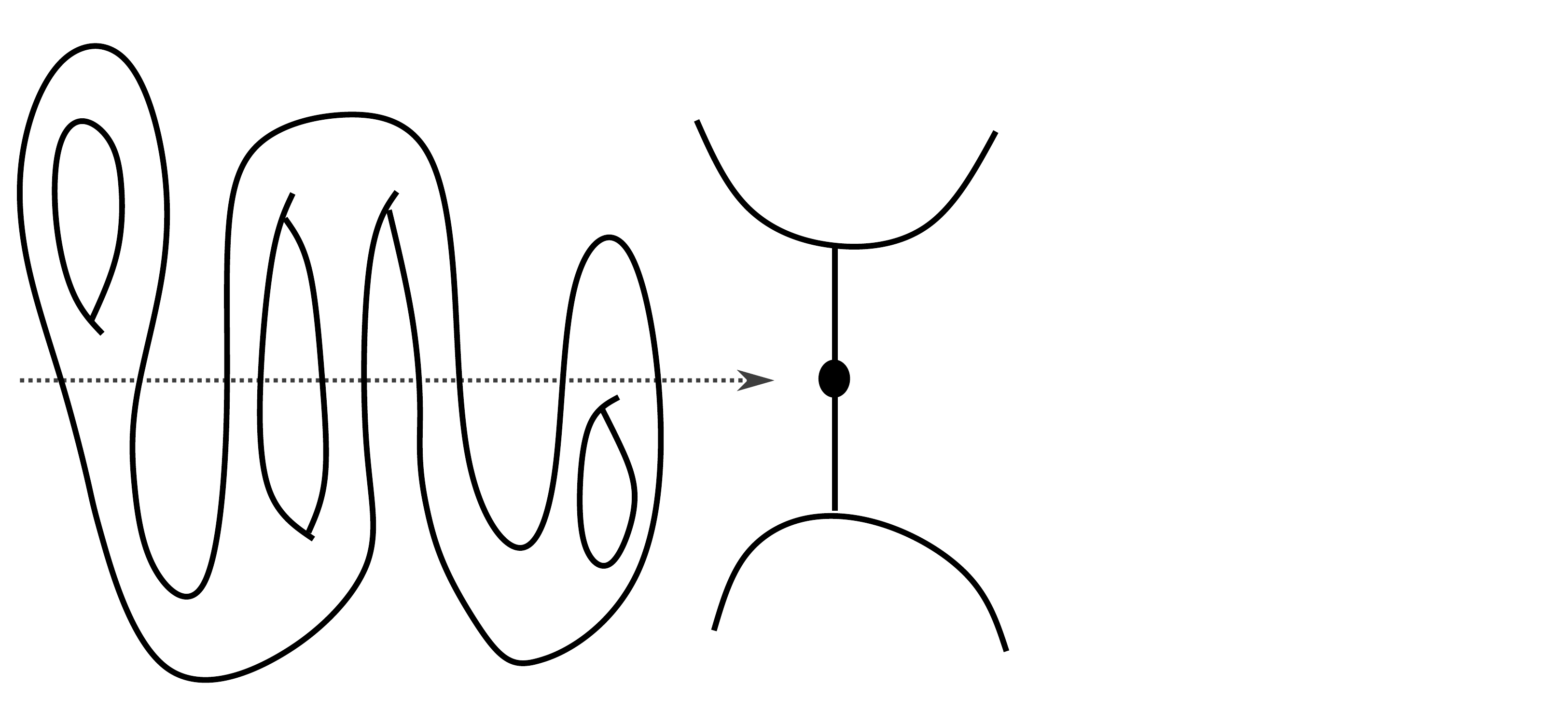}}%
    \put(0.50070597,0.39314247){\color[rgb]{0,0,0}\makebox(0,0)[lt]{\lineheight{1.25}\smash{\begin{tabular}[t]{l}$K_1$\end{tabular}}}}%
    \put(0.49474862,0.00497804){\color[rgb]{0,0,0}\makebox(0,0)[lt]{\lineheight{1.25}\smash{\begin{tabular}[t]{l}$K_2$\end{tabular}}}}%
    \put(0.55809156,0.20031775){\color[rgb]{0,0,0}\makebox(0,0)[lt]{\lineheight{1.25}\smash{\begin{tabular}[t]{l}$\ast$\end{tabular}}}}%
    \put(0,0){\includegraphics[width=\unitlength,page=2]{separating_disks.pdf}}%
    \put(0.66552085,0.206137){\color[rgb]{0,0,0}\makebox(0,0)[lt]{\lineheight{1.25}\smash{\begin{tabular}[t]{l}$\Rightarrow$\end{tabular}}}}%
    \put(0,0){\includegraphics[width=\unitlength,page=3]{separating_disks.pdf}}%
    \put(0.10236823,0.41864018){\color[rgb]{0,0,0}\makebox(0,0)[lt]{\lineheight{1.25}\smash{\begin{tabular}[t]{l}$\surface$\end{tabular}}}}%
    \put(0.84797563,0.078304){\color[rgb]{0,0,0}\makebox(0,0)[lt]{\lineheight{1.25}\smash{\begin{tabular}[t]{l}$\mathbf{G}$\end{tabular}}}}%
  \end{picture}%
\endgroup%
  
\caption{The graph $\mathbf{G}$.}
\label{fig:separating_disks_graph_G}
\end{subfigure}
\begin{subfigure}{0.45\textwidth}
\centering
\def\svgwidth{0.95\columnwidth}
\begingroup%
  \makeatletter%
  \providecommand\color[2][]{%
    \errmessage{(Inkscape) Color is used for the text in Inkscape, but the package 'color.sty' is not loaded}%
    \renewcommand\color[2][]{}%
  }%
  \providecommand\transparent[1]{%
    \errmessage{(Inkscape) Transparency is used (non-zero) for the text in Inkscape, but the package 'transparent.sty' is not loaded}%
    \renewcommand\transparent[1]{}%
  }%
  \providecommand\rotatebox[2]{#2}%
  \newcommand*\fsize{\dimexpr\f@size pt\relax}%
  \newcommand*\lineheight[1]{\fontsize{\fsize}{#1\fsize}\selectfont}%
  \ifx\svgwidth\undefined%
    \setlength{\unitlength}{1133.85826772bp}%
    \ifx\svgscale\undefined%
      \relax%
    \else%
      \setlength{\unitlength}{\unitlength * \real{\svgscale}}%
    \fi%
  \else%
    \setlength{\unitlength}{\svgwidth}%
  \fi%
  \global\let\svgwidth\undefined%
  \global\let\svgscale\undefined%
  \makeatother%
  \begin{picture}(1,0.625)%
    \lineheight{1}%
    \setlength\tabcolsep{0pt}%
    \put(0,0){\includegraphics[width=\unitlength,page=1]{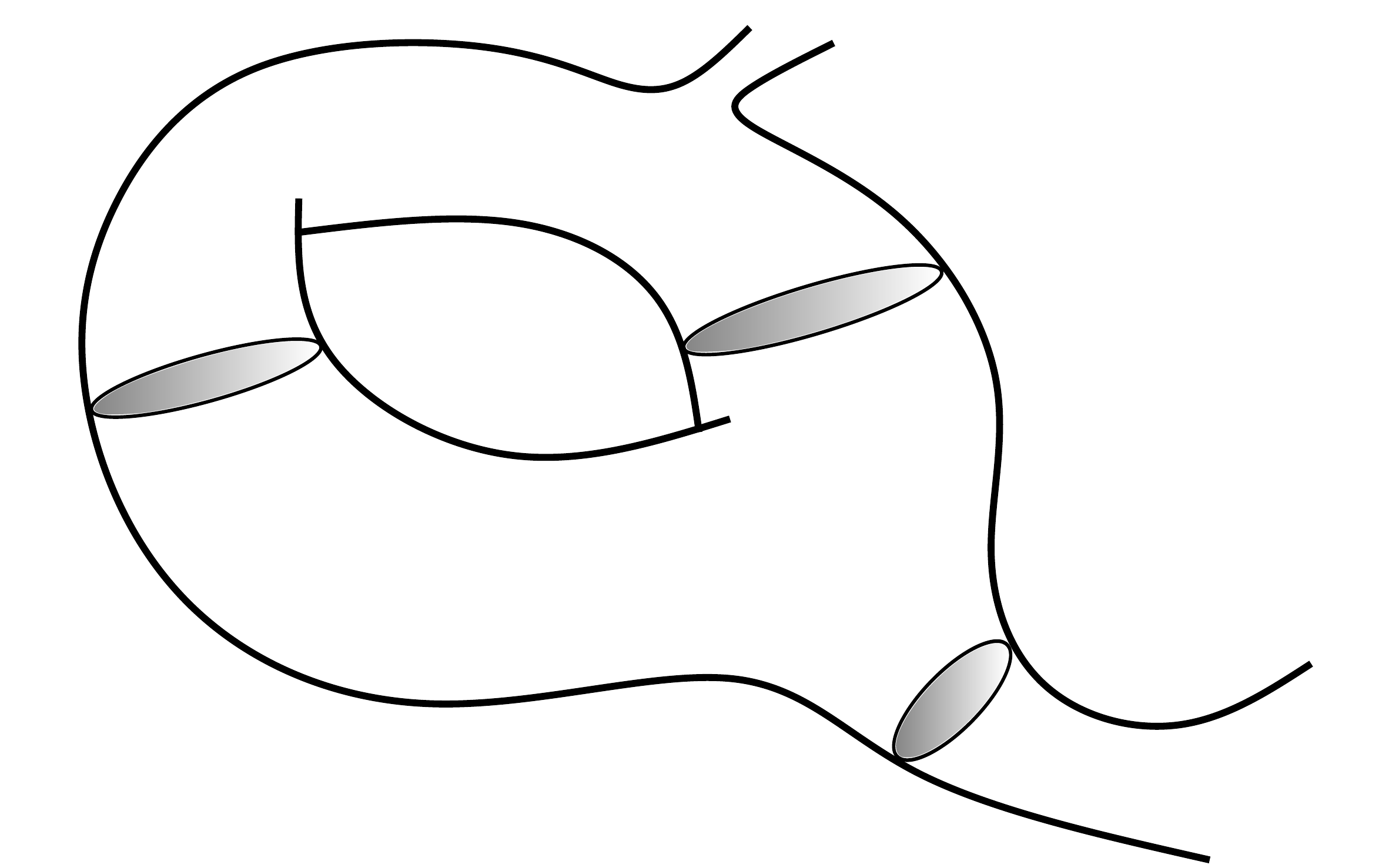}}%
    \put(0.85768738,0.05797333){\color[rgb]{0,0,0}\makebox(0,0)[lt]{\lineheight{1.25}\smash{\begin{tabular}[t]{l}{\tiny $\ast_\Sigma$}\end{tabular}}}}%
    \put(0,0){\includegraphics[width=\unitlength,page=2]{kneser_no_loops.pdf}}%
    \put(0.45266834,0.22156925){\color[rgb]{0,0,0}\makebox(0,0)[lt]{\lineheight{1.25}\smash{\begin{tabular}[t]{l}{\tiny $l_1$}\end{tabular}}}}%
    \put(0.22024811,0.28459587){\color[rgb]{0,0,0}\makebox(0,0)[lt]{\lineheight{1.25}\smash{\begin{tabular}[t]{l}{\tiny $l_2$}\end{tabular}}}}%
    \put(0.15433323,0.45772167){\color[rgb]{0,0,0}\makebox(0,0)[lt]{\lineheight{1.25}\smash{\begin{tabular}[t]{l}{\tiny $l_3$}\end{tabular}}}}%
    \put(0.47676018,0.49901773){\color[rgb]{0,0,0}\makebox(0,0)[lt]{\lineheight{1.25}\smash{\begin{tabular}[t]{l}{\tiny $l_4$}\end{tabular}}}}%
  \end{picture}%
\endgroup%
  
\caption{$l$ and the connecting arc $\gamma$.}
\label{fig:no_loop_l_gamma}
\end{subfigure}
\caption{}
\end{figure}
 
\textbf{Claim $1$: No loop in $\mathbf{G}$.}
We argue by contradiction. 
Suppose there is a loop in a component $G$ of $\mathbf{G}$. 
Without loss of generality, it may be assumed
that $\Sigma:=\Sigma_{11}\subset\surface_1$ 
is the component corresponding to $G$.
 

The loop induces an embedded 
loop $l$ transverse to $\bigcup_{k=1}^{n_d} \partial D_k$ in $\Sigma$  
such that 
$l\cap \big(\bigcup_{k=1}^{n_d} D_k\big)\neq \emptyset$ 
and $l\cap D_k$ contains no more than one point for each $k$.
In particular, $l$ is dual to $\partial D_k$ in $\Sigma$, for some 
$k$, and hence, is
essential in $M$ by \eqref{eq:non-trivial:exact_seq_for_Sigma_M}.   

Now, $l\cap h^{-1}(\ast)$ divides $l$
into $2n$ arcs $l_1$,\dots,$l_{2n}$ with end points of 
each $l_i$ lying  
in different disks (Fig.\ \ref{fig:no_loop_l_gamma}). 
Up to reindexing,
we may assume   
that $h\circ l_i$ is a loop in $K_1$ when $i$ is odd,
and is a loop in $K_2$ when $i$ is even. 
Connect the base point $\ast_\Sigma$ 
of $\Sigma$ to $l$ via an arc $\gamma$.
Then the composition \eqref{eq:homo_bdry_to_M} 
sends $\gamma l\gamma^{-1}$ to an element 
$y x_1  x_2\dots x_{2n-1}  x_{2n}  y^{-1}\in A_1$, where 
$y\in A_1 \ast A_2$ is induced by $\gamma$, and
$x_i=[h\circ l_i]$ in $A_j$, $i\equiv j$ (mod $2$). In particular, 
some $x_i$, say $x_1$, must be trivial.
 
It may be assumed that $\partial l_1={p,q}$ are in 
the boundary of disks $D_1,D_2\in h^{-1}(\ast)$, respectively. 
Construct a map $h':M\rightarrow K$ that satisfies 
\begin{equation}\label{eq:hprime_condition} 
\begin{cases}
h'(x)=h(x) &  x\in M\setminus \mathring{\mathfrak{N}},\\
h'(D_3)=\ast:=\{\frac{1}{2}\}\in I=[0,1],\\
h'(D_1\cup D_2 \cup l_1)=\{\frac{2}{3}\}\subset I,\\
h'(I_i)=[\frac{1}{4},\frac{3}{4}]\subset I,
\end{cases}
\end{equation}
where $\mathring{\mathfrak{N}}$ is the union of open regular neighborhoods 
$\mathring{\mathfrak{N}}(D_i)$, $\mathring{\mathfrak{N}}(l_1)$ of $D_i$, $i=1,2$,
$l_1$, respectively, such that 
$h(\mathring{\mathfrak{N}}(D_i))=(\frac{1}{4},\frac{3}{4})$, and 
$D_3$ is the boundary component of a regular neighborhood $\mathfrak{N}'\subset
\mathring{\mathfrak{N}}$ 
of $D_1\cup D_2\cup l_1$ 
not parallel to $D_1$ or $D_2$, and 
$I_i\subset 
\partial  \mathfrak{N}(D_i) \cap \partial  \mathfrak{N}$  
is an arc connecting the closures of
components of $\partial \mathfrak{N}(D_i)-\partial M$ 
with $h(I_i)=[\frac{1}{4},\frac{3}{4}]$, $i=1,2$.
Because $h\circ l_1$ is null homotopic, 
by obstruction theory \eqref{eq:hprime_condition}
can be extended to a global map $h'$ from $M$ to $K$ 
such that
$h^{'-1}(\ast)$ contains the same $2$-spheres
but one disk less since disks $D_1,D_2$ are now merged into $D_3$
via a $\bar{\nabla}$-move along $l_1$ \cite[Section $3$]{Suz:75}, \cite{Joh:91} (Fig.\ \ref{fig:homotopy}).
It is not difficult to see that $h'$ is homotopic 
to $h$---because $h\circ l_1$ is null-homotopic. 
This contradicts
the minimality of $\# h^{-1}(\ast)$, and thus, $\mathbf{G}$ is 
a union of trees. 
\begin{figure}[ht]
\begin{subfigure}{0.48\textwidth}
\centering
\def\svgwidth{0.95\columnwidth}
\begingroup%
  \makeatletter%
  \providecommand\color[2][]{%
    \errmessage{(Inkscape) Color is used for the text in Inkscape, but the package 'color.sty' is not loaded}%
    \renewcommand\color[2][]{}%
  }%
  \providecommand\transparent[1]{%
    \errmessage{(Inkscape) Transparency is used (non-zero) for the text in Inkscape, but the package 'transparent.sty' is not loaded}%
    \renewcommand\transparent[1]{}%
  }%
  \providecommand\rotatebox[2]{#2}%
  \newcommand*\fsize{\dimexpr\f@size pt\relax}%
  \newcommand*\lineheight[1]{\fontsize{\fsize}{#1\fsize}\selectfont}%
  \ifx\svgwidth\undefined%
    \setlength{\unitlength}{1417.32283465bp}%
    \ifx\svgscale\undefined%
      \relax%
    \else%
      \setlength{\unitlength}{\unitlength * \real{\svgscale}}%
    \fi%
  \else%
    \setlength{\unitlength}{\svgwidth}%
  \fi%
  \global\let\svgwidth\undefined%
  \global\let\svgscale\undefined%
  \makeatother%
  \begin{picture}(1,0.5)%
    \lineheight{1}%
    \setlength\tabcolsep{0pt}%
    \put(0,0){\includegraphics[width=\unitlength,page=1]{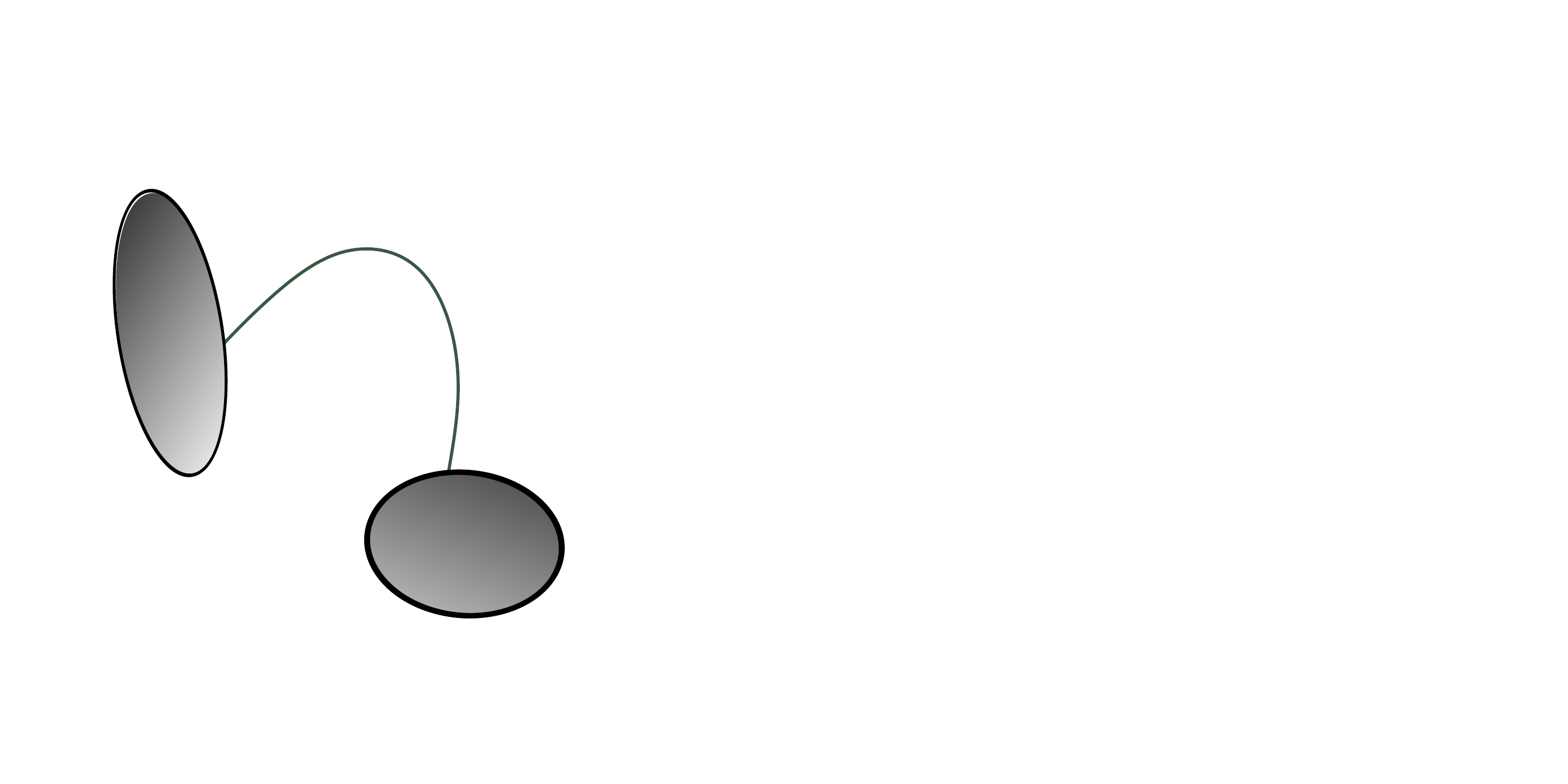}}%
    \put(0.25448619,0.34444542){\color[rgb]{0,0,0}\makebox(0,0)[lt]{\lineheight{1.25}\smash{\begin{tabular}[t]{l}{\tiny $l_1$}\end{tabular}}}}%
    \put(0,0){\includegraphics[width=\unitlength,page=2]{homotopy_delta.pdf}}%
    \put(0.10463359,0.15724021){\color[rgb]{0,0,0}\makebox(0,0)[lt]{\lineheight{1.25}\smash{\begin{tabular}[t]{l}{\tiny $D_1$}\end{tabular}}}}%
    \put(0.36313253,0.15810576){\color[rgb]{0,0,0}\makebox(0,0)[lt]{\lineheight{1.25}\smash{\begin{tabular}[t]{l}{\tiny $D_2$}\end{tabular}}}}%
    \put(0,0){\includegraphics[width=\unitlength,page=3]{homotopy_delta.pdf}}%
    \put(0.79687036,0.36222659){\color[rgb]{0,0,0}\makebox(0,0)[lt]{\lineheight{1.25}\smash{\begin{tabular}[t]{l}{\tiny $D_3$}\end{tabular}}}}%
    \put(0,0){\includegraphics[width=\unitlength,page=4]{homotopy_delta.pdf}}%
  \end{picture}%
\endgroup%
  
\caption{$\bar{\nabla}$-move along $l_1$.}
\label{fig:homotopy}
\end{subfigure}
\begin{subfigure}{0.48\textwidth}
\centering
\def\svgwidth{0.95\columnwidth}
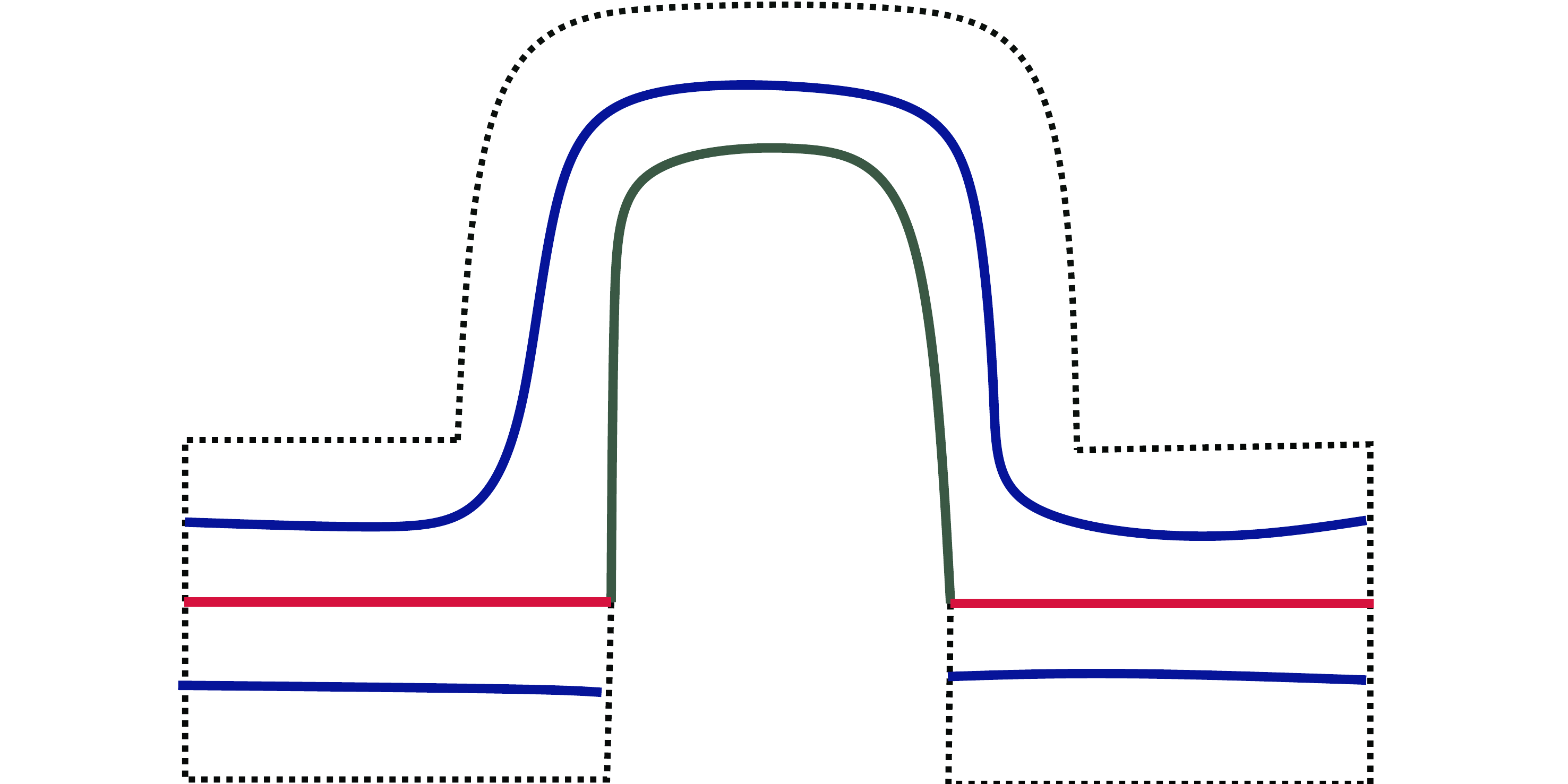  
\caption{The map $h'$ near $D_1\cup D_2\cup l_1$.}
\label{fig:hprime}
\end{subfigure}
\caption{Construction of $h'$.}
\end{figure} 

Recall that a $1$-valent node
in a graph is called a leaf. 
We define the genus of a leaf in $\mathbf{G}$
to be the genus of the corresponding component.


\textbf{Claim $2$: At most one positive genus leaf  
in each component of $\mathbf{G}$.}
Suppose there are two leaves with genus greater than $0$ in a component 
$G$ of $\mathbf{G}$. 
Denote by $\Sigma$ the component of $\surface$
corresponding to $G$ and by 
$\Sigma', \Sigma''$ components of
$\Sigma\setminus \bigcup_{i=1}^{n_k} \partial D_i$ corresponding to
the two positive genus leaves. 
Without loss of generality, it may be assumed that
\begin{equation}\label{eq:homo_boundary_to_A_1}
\pi_1(\Sigma)\xrightarrow{\delta}\pi_1(M)\xrightarrow{\phi} 
A_1\ast A_2
\end{equation}  
factors through $A_1\rightarrow A_1\ast A_2$. 
%
Since $\Sigma'$ (resp.\ $\Sigma''$)
has positive genus, there exists an essential loop $l'$ 
(resp.\ $l''$) in $\Sigma'$ (resp.\ $\Sigma''$) 
which is also essential in $M$ by 
\eqref{eq:non-trivial:exact_seq_for_Sigma_M}.

Suppose $h(\Sigma')$ is in the component of 
$K\setminus \ast$ containing $K_2$ (Fig.\ \ref{fig:no_positive_leaf_K_2}). 
Then $h\circ l'$ is essential in $K_2$. 
On the other hand, the composition \eqref{eq:homo_boundary_to_A_1}
factors through $A_1\rightarrow A_1\ast A_2$ implies 
$\phi\circ\delta [\gamma l'\gamma^{-1}]=yxy^{-1}\in A_1$,
where $x\neq 1$ in $A_2$ is induced by $l'$, and
$y\in A_1\ast A_2$ is induced by the arc $\gamma$ 
connecting $\ast_\Sigma$ to $l'$. 
This is possible, however, only when $x=1$. 
Similarly, one can show that 
$h(\Sigma'')$ cannot be in the 
component of $K\setminus \ast$ containing $K_2$.
\begin{figure}[ht]
\begin{subfigure}{0.48\textwidth}
\centering
\def\svgwidth{0.95\columnwidth}
\begingroup%
  \makeatletter%
  \providecommand\color[2][]{%
    \errmessage{(Inkscape) Color is used for the text in Inkscape, but the package 'color.sty' is not loaded}%
    \renewcommand\color[2][]{}%
  }%
  \providecommand\transparent[1]{%
    \errmessage{(Inkscape) Transparency is used (non-zero) for the text in Inkscape, but the package 'transparent.sty' is not loaded}%
    \renewcommand\transparent[1]{}%
  }%
  \providecommand\rotatebox[2]{#2}%
  \newcommand*\fsize{\dimexpr\f@size pt\relax}%
  \newcommand*\lineheight[1]{\fontsize{\fsize}{#1\fsize}\selectfont}%
  \ifx\svgwidth\undefined%
    \setlength{\unitlength}{992.12598425bp}%
    \ifx\svgscale\undefined%
      \relax%
    \else%
      \setlength{\unitlength}{\unitlength * \real{\svgscale}}%
    \fi%
  \else%
    \setlength{\unitlength}{\svgwidth}%
  \fi%
  \global\let\svgwidth\undefined%
  \global\let\svgscale\undefined%
  \makeatother%
  \begin{picture}(1,0.71428571)%
    \lineheight{1}%
    \setlength\tabcolsep{0pt}%
    \put(0,0){\includegraphics[width=\unitlength,page=1]{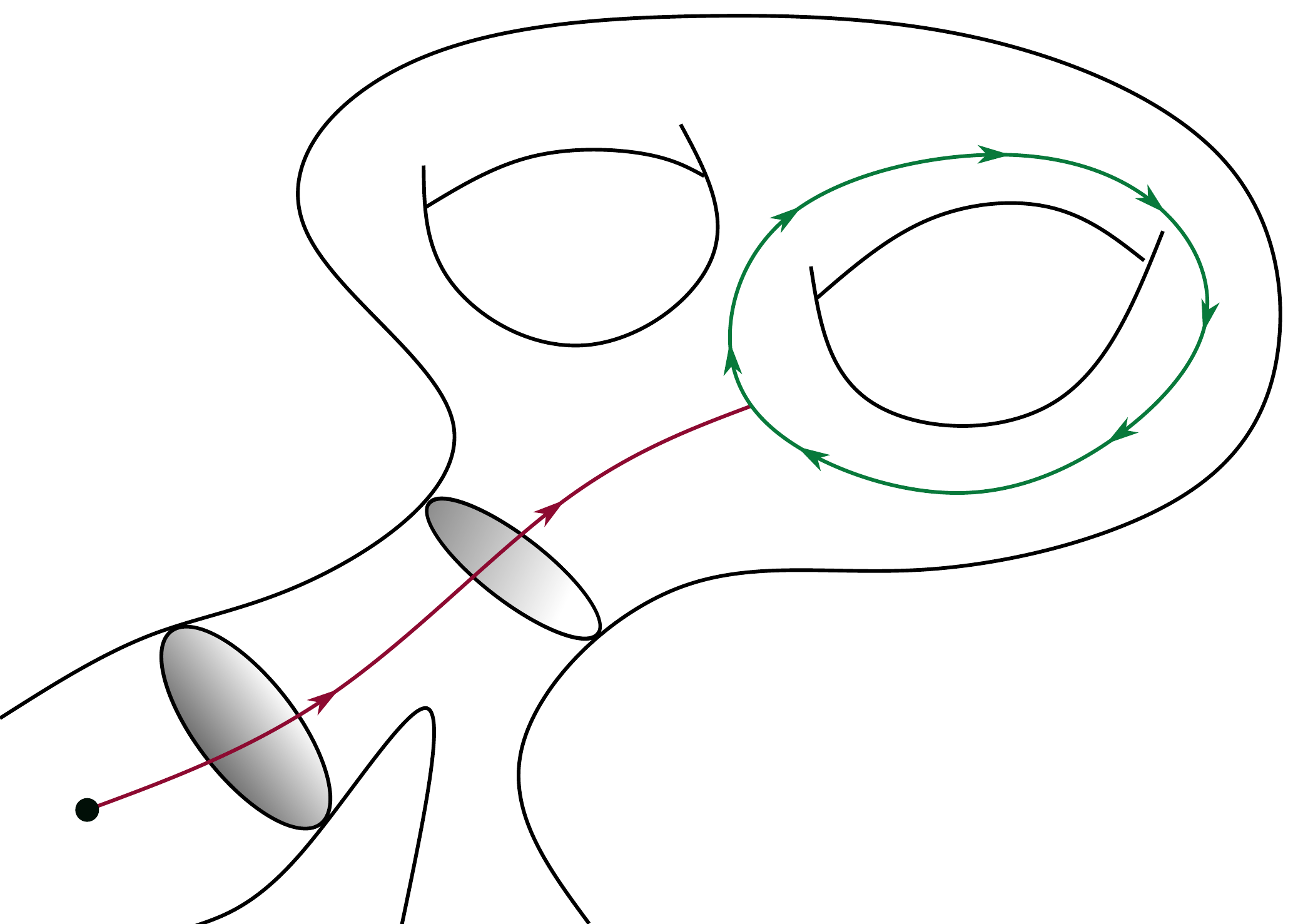}}%
    \put(0.06988564,0.05899449){\color[rgb]{0,0,0}\makebox(0,0)[lt]{\lineheight{1.25}\smash{\begin{tabular}[t]{l}{\tiny $\ast_\Sigma$}\end{tabular}}}}%
    \put(0.62534031,0.30767831){\color[rgb]{0,0,0}\makebox(0,0)[lt]{\lineheight{1.25}\smash{\begin{tabular}[t]{l}{\tiny $l'$}\end{tabular}}}}%
    \put(0.1742603,0.66709021){\color[rgb]{0,0,0}\makebox(0,0)[lt]{\lineheight{1.25}\smash{\begin{tabular}[t]{l}{\tiny $\Sigma'$}\end{tabular}}}}%
    \put(0.39480848,0.34852064){\color[rgb]{0,0,0}\makebox(0,0)[lt]{\lineheight{1.25}\smash{\begin{tabular}[t]{l}{\tiny $\gamma$}\end{tabular}}}}%
  \end{picture}%
\endgroup%
  
\caption{$l'$ and the connecting arc $\gamma$.}
\label{fig:no_positive_leaf_K_2}
\end{subfigure}
\begin{subfigure}{0.48\textwidth}
\centering
\def\svgwidth{0.95\columnwidth}
\begingroup%
  \makeatletter%
  \providecommand\color[2][]{%
    \errmessage{(Inkscape) Color is used for the text in Inkscape, but the package 'color.sty' is not loaded}%
    \renewcommand\color[2][]{}%
  }%
  \providecommand\transparent[1]{%
    \errmessage{(Inkscape) Transparency is used (non-zero) for the text in Inkscape, but the package 'transparent.sty' is not loaded}%
    \renewcommand\transparent[1]{}%
  }%
  \providecommand\rotatebox[2]{#2}%
  \newcommand*\fsize{\dimexpr\f@size pt\relax}%
  \newcommand*\lineheight[1]{\fontsize{\fsize}{#1\fsize}\selectfont}%
  \ifx\svgwidth\undefined%
    \setlength{\unitlength}{992.12598425bp}%
    \ifx\svgscale\undefined%
      \relax%
    \else%
      \setlength{\unitlength}{\unitlength * \real{\svgscale}}%
    \fi%
  \else%
    \setlength{\unitlength}{\svgwidth}%
  \fi%
  \global\let\svgwidth\undefined%
  \global\let\svgscale\undefined%
  \makeatother%
  \begin{picture}(1,0.71428571)%
    \lineheight{1}%
    \setlength\tabcolsep{0pt}%
    \put(0,0){\includegraphics[width=\unitlength,page=1]{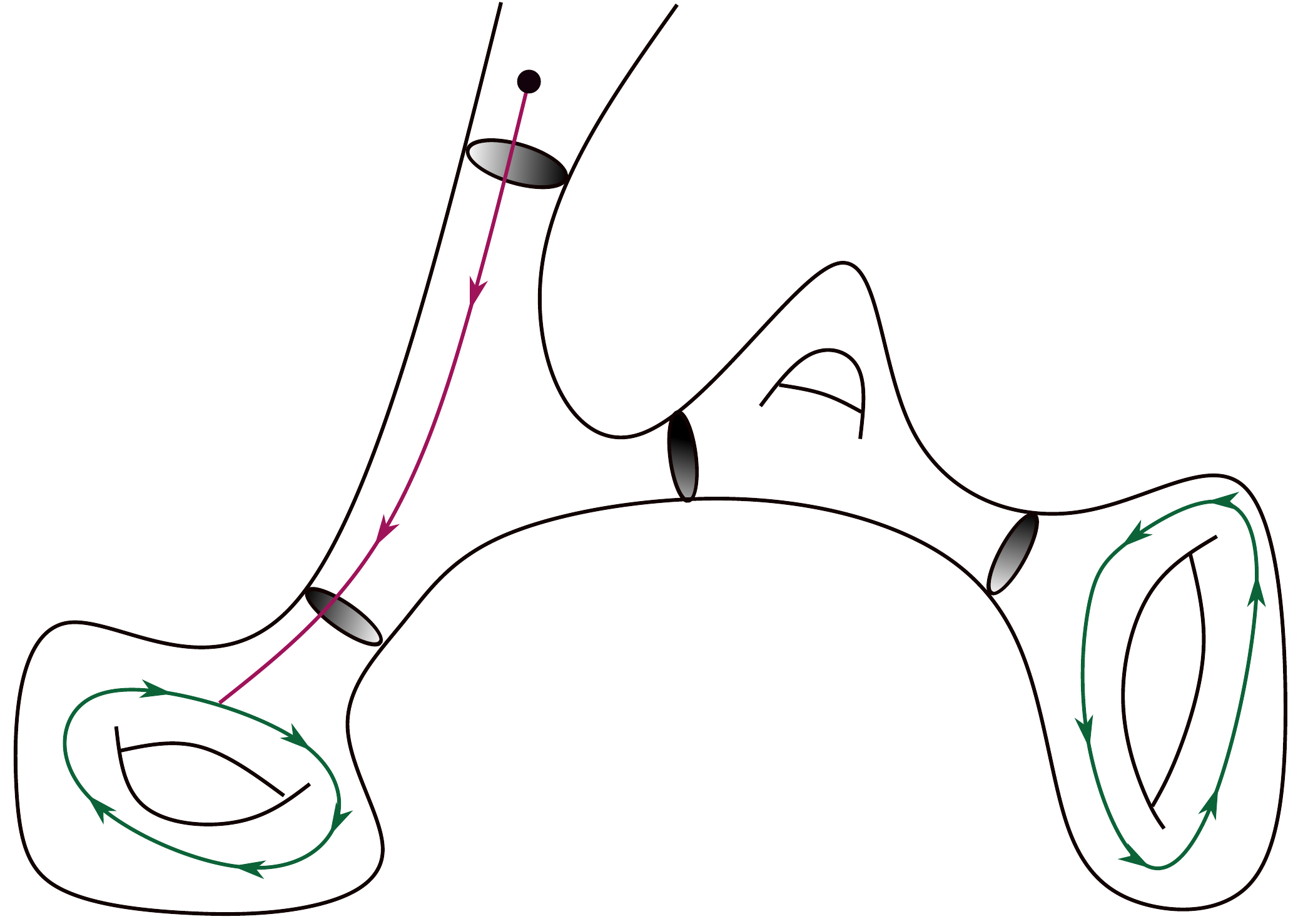}}%
    \put(0.39390093,0.67616628){\color[rgb]{0,0,0}\makebox(0,0)[lt]{\lineheight{1.25}\smash{\begin{tabular}[t]{l}{\tiny $\ast_\Sigma$}\end{tabular}}}}%
    \put(0.03902704,0.04628802){\color[rgb]{0,0,0}\makebox(0,0)[lt]{\lineheight{1.25}\smash{\begin{tabular}[t]{l}{\tiny $l'$}\end{tabular}}}}%
    \put(0.92890485,0.03970319){\color[rgb]{0,0,0}\makebox(0,0)[lt]{\lineheight{1.25}\smash{\begin{tabular}[t]{l}{\tiny $l''$}\end{tabular}}}}%
    \put(0.66569911,0.34556632){\color[rgb]{0,0,0}\makebox(0,0)[lt]{\lineheight{1.25}\smash{\begin{tabular}[t]{l}{\tiny $\beta$}\end{tabular}}}}%
    \put(0.36134918,0.4372345){\color[rgb]{0,0,0}\makebox(0,0)[lt]{\lineheight{1.25}\smash{\begin{tabular}[t]{l}{\tiny $\beta'$}\end{tabular}}}}%
    \put(0,0){\includegraphics[width=\unitlength,page=2]{kneser_no_positive_leaf_in_K_1.pdf}}%
  \end{picture}%
\endgroup%
  
\caption{$l',l''$ and the connecting arc $\beta$.}
\label{fig:no_positive_leaf_K_1}
\end{subfigure}
\caption{No positive leaves.}
\end{figure}

Now, both $h(\Sigma')$ and $h(\Sigma'')$ are in the 
component of $K\setminus \ast$ containing $K_1$ (Fig.\ \ref{fig:no_positive_leaf_K_1}).
Since $G$ is a tree, there exists
arc $\beta$ connecting $l'$ to $l''$
such that $\beta$ meets no
component of $h^{-1}(\ast)$ more than once.
The intersection $\beta \cap h^{-1}(\ast)$
separates $\beta$ into connected 
subarcs $\beta_0,\dots,\beta_{2m}$, $m> 0$.

Connect $\ast_\Sigma$
to $l'$ 
via 
an arc 
$\beta'\subset \Sigma$.
Then the image of the class $[\beta' l'\beta l''\beta^{-1}(\beta')^{-1}]$ under 
\eqref{eq:homo_boundary_to_A_1}
has the form $y x'  y_0\cdots y_{2m} x''y_{2m}^{-1}\cdots y_0^{-1}  y^{-1}\in A_1$ 
with $m \geq 0$, for some $y\in A_1\ast A_2$ induced by $\beta'$,
where the elements $x'=[h\circ l']$ and $x''=[h\circ l'']$ are in $A_1$, 
and the element
$y_i$, induced from $\beta_i$, is in $A_j$, $i\equiv j$ (mod $2$).   
This implies some $h\circ \beta_i$, say $h\circ \beta_0$,
is null-homotopic. Performing the operation 
in the proof of Claim $1$ to merge 
the disks containing the endpoints of $\beta_0$ via $\beta_0$
(Fig.\ \ref{fig:homotopy}), we get a contradiction to the minimality. 
This completes the proof of Claim $2$.

Now, if $\mathbf{G}$ contains a non-degenerate tree, 
a tree having at least one edge, then by Claim $2$ 
it has at least one end with genus $0$. 
This implies that there exists
a disk $D$ in $h^{-1}(\ast)$ with $\partial D$
cutting off a disk $D'$ from $\surface$.  
Push $D\cup D'$ away from $\partial M$ to 
get a $2$-sphere $\Stwo$ in the interior of $M$. 
Then, since $\pi_2(K)$ is trivial, 
one can deform $h$ so that the disk $D$ in $h^{-1}(\ast)$ 
is replaced with the $2$-sphere $\Stwo$ without 
affecting other components of
$h^{-1}(\ast)$ (see \cite[p.\ 66]{Hem:04}), 
contradicting the minimality of $h$. Hence $\mathbf{G}$ is 
a collection of nodes without edges, that is, 
no disk in $h^{-1}(\ast)$ (i.e.\ $n_d=0$).

 
\textbf{Claim $3$: There is no more than one $2$-sphere.}
This follows from the standard binding tie argument \cite{Sta:65},
\cite{Hei:71}, \cite[p.\ 67]{Hem:04}. 
For the sake of completeness, we outline its proof below.
If $h^{-1}(\ast)$ contains more than one $2$-sphere,
then we consider arcs with two ends lying 
in different components 
of $h^{-1}(\ast)$ and mapped to null-homotopic loops
under $h$.
Let $\alpha$ be such an arc that minimizes $\# h^{-1}(\ast)\cap \alpha$, 
and  $\Stwo_1$ and $\Stwo_2$ be the $2$-spheres in $h^{-1}(\ast)$ 
connected by $\alpha$. Then one can show that the interior 
of $\alpha$ must have trivial intersection with $h^{-1}(\ast)$
\cite[p.67]{Hem:04}, and thus one can homotopy $h$ so that 
$\Stwo_1$ and $\Stwo_2$ are
replaced by the $2$-sphere given by 
the union 
\[
\partial \mathfrak{N}(\alpha)\setminus (\Stwo_1\cup \Stwo_2)\bigcup (\Stwo_1\cup \Stwo_2)\setminus \mathring{\mathfrak{N}}(\alpha),
\]
with other components in $h^{-1}(\ast)$ intact,
where $\mathring{\mathfrak{N}}(\alpha)$ is the interior of 
a regular neighborhood $\mathfrak{N}(\alpha)$ 
of $\alpha$. This again contradicts the minimality of $\# h^{-1}(\ast)$.

As a result, $h^{-1}(\ast)$ contains only one $2$-sphere, 
which 
yields a connected sum $M\simeq M_1\# M_2$,  
and the restriction of $h$ on $M_i$ and an connecting arc $\delta_i\subset M_i$ induces an isomorphism $\phi_i:\pi_1(M_i)\rightarrow A_i$,
$i=1,2$, such that the diagram
\begin{equation}\label{diag:conn_from_M_i_to_M}
\begin{tikzpicture}[scale=.8, every node/.style={transform shape}, baseline = (current bounding box.center)]
\node (M) at (0,1.5) {$\pi_1(M)$};
\node (Mi) at (0,0) {$\pi_1(M_i)$};
\node (A)  at (5,1.5) {$A_1\ast A_2$};
\node (Ai) at (5,0) {$A_i$};

\draw [->] (M) to node [below]{$\sim$} node [above]{$\phi$} (A);
\draw [->] (Mi) to node [below]{$\sim$} node [above]{$\phi_i$}(Ai);
\draw [->] (Mi) to node [right]{$\delta_i$}(M);
\draw [->] (Ai) to (A);
\end{tikzpicture} 
\end{equation}
commutes.

\textbf{Step $2$: Locate $\surface_1$ and $\surface_2$.}
Here we show that $\partial M_i=\surface_i$, $i=1,2$.
Suppose there is a component $\Sigma$ of 
$\surface_1$ in $\partial M_2$, 
that is, $h(\Sigma)$ in the component of $K\setminus \ast$
containing $K_2$.  
Then we choose a loop $l$ in $\Sigma$ that is essential
in $M$, 
and connect it to $\ast_M$ in $M_2$. Denote the resulting loop 
by $\tilde{l}$, and remark that $x=[h\circ \tilde{l}]$ is non-trivial in $A_2$. 
On the other hand, the composition \eqref{eq:homo_boundary_to_A_1}
implies $y x  y^{-1}\in A_1$ for some $y\in A_1\ast A_2$, but this is 
possible only when $x=1$. 
Thus, every component of $\surface_1$ must be 
in $\partial M_1$, and similarly, $\surface_2=\partial M_2$.

\textbf{Step $3$: Compatibility between $\phi_i,\phi_{ij}$.}
Step $1$ shows that {\bf III} in the diagram \eqref{diag:moving_delta_ij}
is commutative. Here we show that {\bf I} and {\bf II} 
in \eqref{diag:moving_delta_ij} can also be made commutative. 
\begin{equation}\label{diag:moving_delta_ij}
\begin{tikzpicture}[scale=.8, every node/.style={transform shape}, baseline = (current bounding box.center)]
\node (M) at (0,1.5) {$\pi_1(M)$};
\node (Mi) at (0,0) {$\pi_1(M_i )$};
\node (A)  at (4,1.5) {$A_1\ast A_2$};
\node (Ai) at (4,0) {$A_i$};
\node (S) at (0,-1.5) {$\pi_1(\Sigma_{ij} )$};

\node  at (2,.75) {\bf III};
\node   at (2,-.75) {\bf II};
\node  at (-.9,0) {\bf I};

\draw [->] (M) to node [below]{$\sim$}node[above]{$\phi$}(A);
\draw [->] (Mi) to node [below]{$\sim$}node[above]{$\phi_i$}(Ai);
\draw [->] (Mi) to node [right]{ $\delta_i$} (M);
\draw [->] (Ai) to node [right]{$\iota$}(A);
\draw [->] (S) to (Mi);
\draw [->] (S) to [out=180, in=180] node [left]{$\delta_{ij}$}(M);
\draw [->] (S) to [out=0,in=-90] node [above] {$\phi_{ij}$}(Ai);
\end{tikzpicture}
\end{equation}
Choose an arc $\epsilon_{ij}'$ connecting the base point 
of $\Sigma_{ij}$ to the base point of $M_i$. 
Then there exists an element $g\in A_1\ast A_2$ such that 
\[\phi\circ \delta_{ij}(x)=g\big(\phi\circ\delta_i\circ\epsilon_{ij}'(x)\big)g^{-1},
\forall x \in \pi_1(\Sigma_{ij}).\]
On the other hand, since $\phi\circ\delta_i=\iota\circ\phi_i$ and 
$\phi\circ \delta_{ij}$ factors through $\phi_{ij}$, 
both images of $\pi_1(\Sigma_{ij})$
under $\phi\circ \delta_{ij}$ and $\phi\circ\delta_i\circ\epsilon_{ij}'$
are in $A_i$. So, $g$ must be in $A_i$.
Choose a loop $\beta$ in $M_i$ representing $\phi_i^{-1}(g)$.
Then the induced homomorphism
\[\pi_1(\Sigma_{ij})\xrightarrow{\epsilon_{ij}} \pi_1(M_i)\]
of the new connecting arc $\epsilon_{ij}:=\epsilon_{ij}'\beta$ 
makes \eqref{diag:moving_delta_ij} commute.

Now, if $\partial M$ contains spherical components, 
we cap off those components by some $3$-balls
to obtain a new $3$-manifold $\widehat{M}$ with $\partial \widehat{M}$ containing no spherical components. Apply the above argument in \textbf{Step $1$} to $\widehat{M}$, isotoping the $2$-sphere $h^{-1}(\ast)$
away from those $3$-balls, to obtain the desired connected sum.
Note that \textbf{Step $2$} and \textbf{Step $3$} can go through without modification in this case.

\nada{ 
\begin{tikzpicture}[scale=.8, every node/.style={transform shape}, baseline = (current bounding box.center)]
\node (baseM) at (2.5,1.3) {$\ast_{M}$};
\node (baseMi) at (2,-.2) {$\ast_{M_i}$};
\node (baseij) at (0,0) {$\ast_{ij}$};
\draw [thick, ->] (baseij) to [out=60, in=-180] node [above]{$\delta_{ij}$}(baseM);
\draw [thick,->] (baseij) to [out=0, in=-170] node [below]{$\gamma$}(baseMi);
\draw [thick,->] (baseMi) to [looseness=5, out=210, in= 330] 
node[pos=0.2,below ] {$\beta$}(baseMi);
\draw [thick,->] (baseMi) to [out=80, in=-80] node [right]{$\delta_i$}(baseM);
\end{tikzpicture}
}
\end{proof}

\begin{remark}\label{rmk:classic_Kneser_conjecture}
Here we explain how Theorem \ref{intro:teo:g_kneser_conjecture} implies
the classical Kneser conjecture \cite{Hei:71}, \cite[Chap.\ $7$]{Hem:04}, 
where $M$ is assumed to be $\partial$-irreducible.  
Consider the induced homomorphism $\pi_1(\Sigma)\xrightarrow{\delta} \pi_1(M)$, and suppose the composition
\[\pi_1(\Sigma)\xrightarrow{\delta} \pi_1(M)
\xrightarrow[\sim]{\phi} A_1\ast A_2,\]
does not factor through $A_1$ or $A_2$.
Then, since $M$ is $\partial$-irreducible, and  
$\pi_1(\Sigma)$ is indecomposable, 
$\pi_1(\Sigma)$ can be identified with
a subgroup in $g A_i g^{-1}$, $i=1$ or $2$, 
for some $g\in A_1\ast A_2$. 
Replace the connecting arc $\delta$
with $\delta\gamma$, 
where $\gamma$ is a loop in $M$ 
representing $\phi^{-1}(g^{-1})$. 
Then the composition
\[\pi_1(\Sigma)\xrightarrow{\delta\gamma} \pi_1(M)
\xrightarrow[\sim]{\phi} A_1\ast A_2\]
factors through $A_i$. Applying this
to every component of $\partial M$,
we see that the assumption of the classical Kneser conjecture implies 
the condition of Theorem \ref{intro:teo:g_kneser_conjecture}. 
\end{remark}
In the proof of Theorem \ref{intro:main_thm}, 
Theorem \ref{intro:teo:g_kneser_conjecture} is used in the following form. 
\begin{corollary}\label{cor:application_g_kneser}
Given two $3$-manifolds $M, M'$ with no spherical boundary components, 
suppose $M\simeq M_1 \# M_2$, and
there exist isomorphisms 
\[\phi_M:\pi_1(M)\rightarrow \pi_1(M') \quad and 
\quad \phi_{ij}:\pi_1(\Sigma_{ij})\rightarrow \pi_1(\Sigma_{ij}')\] 
such that the following diagram 
\begin{equation}\label{diag:compatiability_MMprime}
\begin{tikzpicture}[scale=.8, every node/.style={transform shape}, baseline = (current bounding box.center)]
\node (M) at (0,2) {$\pi_1(M)$};
\node (Mprime) at (5,2) {$\pi_1(M')$};
\node (B)  at (0,0) {$\pi_1(\Sigma_{ij})$};
\node (Bprime) at (5,0) {$\pi_1(\Sigma_{ij}')$};

\draw [->] (M) to node [above]{$\phi_M$}(Mprime);
\draw [->] (B) to node [above]{$\phi_{ij}$}(Bprime);
\draw [->] (B) to   (M);
\draw [->] (Bprime) to  (Mprime);
\end{tikzpicture} 
\end{equation}  
commutes, up to conjugation,
for every component $\Sigma_{ij}\subset \partial M$, where
\[\bigcup_j \Sigma_{ij}=\partial M_i \quad \text{and} 
\quad \bigcup_{i,j} \Sigma_{ij}'=\partial M'.\] 
Then there exist $M_1'$, $M_2'$ such that 
$M'\simeq M_1'\# M_2'$, $\partial M_i'=\bigcup_j \Sigma_{ij}'$,
and isomorphisms 
\[\phi_i:\pi_1(M_i)\rightarrow \pi_1(M_i'),\quad i=1,2\]  
such that the following diagram
\begin{center}
\begin{tikzpicture}[scale=.8, every node/.style={transform shape}, baseline = (current bounding box.center)]
\node (Mi) at (0,2) {$\pi_1(M_i)$};
\node (Miprime) at (5,2) {$\pi_1(M_i')$};
\node (Bi)  at (0,0) {$\pi_1(\Sigma_{ij})$};
\node (Biprime) at (5,0) {$\pi_1(\Sigma_{ij}')$};

\draw [->] (Mi)to node [above]{$\phi_i$}(Miprime);
\draw [->] (Bi) to node [above]{$\phi_{ij}$}(Biprime);
\draw [->] (Bi) to  (Mi);
\draw [->] (Biprime) to (Miprime);
\end{tikzpicture} 
\end{center}
commutes, up to conjugation.
\end{corollary}
\begin{proof}
Modify connecting arcs between base points of $\Sigma_{ij}'$ and
$M'$ to make the diagram \eqref{diag:compatiability_MMprime}
commute strictly.
Then apply Theorem \ref{intro:teo:g_kneser_conjecture}.
\end{proof}


\section{Proof of the main theorem}\label{sec:proof}

\begin{proof}[Proof of Theorem \ref{intro:main_thm}]
Suppose two pairs $\SSS=\pair$, $\SSS'=\pairprime$ have
equivalent fundamental trees. Then
after reindexing 
if necessary,  
there are isomorphisms 
\begin{align*}
\phi_{\Sigma_i}:\pi_1(\Sigma_i)&\rightarrow \pi_1(\Sigma_i'),& i=1,\cdots,n\\
\phi_{F_j}:\pi_1(F_j)&\rightarrow \pi_1(F_j'), &j=0,\cdots,n,
\end{align*}
such that for every $\Sigma_i\subset F_j$, $\Sigma_i'\subset F_j'$ 
the diagram
\begin{equation}\label{diag:equivalent_fund_tree}
\begin{tikzpicture}[scale=.8, every node/.style={transform shape}, baseline = (current bounding box.center)]
\node (Lu) at (0,1.5){$\pi_1(F_j)$};
\node (Ll) at (0,0){$\pi_1(\Sigma_i)$};
\node (Ru) at (3,1.5){$\pi_1(F_j')$};
\node (Rl) at (3,0){$\pi_1(\Sigma_i')$};

\draw [->] (Lu) to node [above]{\footnotesize $\phi_{F_j}$}(Ru);
\draw [->] (Ll) to node [above]{\footnotesize $\phi_{\Sigma_i}$}(Rl);
\draw [->] (Ll) to  (Lu); 
\draw [->] (Rl) to  (Ru); 
\end{tikzpicture}
\end{equation}
commutes, up to conjugation.

We consider the special case 
where $\SSS$ is non-splittable first, and then reduce 
the general case to the special 
one via non-splittable 
graft decomposition 
and Theorem \ref{intro:teo:g_kneser_conjecture}.

\textbf{Case $1$. Non-splittable pair $\SSS=\pair$.}
First, observe that $\SSS$ is non-splittable if and only if
every solid part of $\SSS$ is irreducible.
By the diagram \eqref{diag:equivalent_fund_tree} 
and Theorem \ref{intro:teo:g_kneser_conjecture}, 
solid parts of $\pairprime$ are irreducible,
and hence $\SSS'$ is also non-splittable.

Secondly, by the Dehn-Nielsen-Baer theorem, 
$\phi_{\Sigma_i}$ and the orientation of $\Sigma_i$,
$i=1,\cdots,n$,
induces a homeomorphism $f_j$ from $\partial F_j$ to $\partial F_j'$,
for each $j$.
This homeomorphism extends to a map
$g_j$ inducing $\phi_{F_j}$
by the construction in \cite[Lemma $13.8$]{Hem:04}.
Since $\phi_{F_j}$ is an isomorphism 
and $g_j\vert_{\partial F_j}=f_j$ is a homeomorphism,
by \cite[Theorem $13.6$]{Hem:04}, there is a 
homeomorphism $h_j$ homotopic to $g_j$ relative to
the boundary.   
Gluing $h_j$, $j=0,\cdot,n$, together along $\partial F_j$,
we obtain an equivalence between $\SSS$ and $\SSS'$.

%
\textbf{Case $2$. General pair $\SSS=\pair$.} 
By Proposition \ref{prop:nonsplit_graft_decomp}, 
the pair $\SSS$ admits a unique non-splittable 
graft decomposition:
\begin{equation}\label{eq:graft_decomp_of_S}
\SSS\simeq \SSS_1\dashleftarrow \SSS_2\dashleftarrow\dots \dashleftarrow\SSS_m.
\end{equation}
%
 
%
Properly choosing base-points and connecting arcs between them,
we obtain that \eqref{eq:graft_decomp_of_S} induces
a graft decomposition of the fundamental tree of $\SSS$: 
\begin{equation}\label{eq:induced_decomp_S}
\mathcal{FT}(\SSS)\simeq \mathcal{FT}(\SSS_1)\dashleftarrow\mathcal{FT}(\SSS_2)\dashleftarrow\dots\dashleftarrow \mathcal{FT}(\SSS_m).
\end{equation}  
Since $\mathcal{FT}(\SSS)$ and $\mathcal{FT}(\SSS')$
are equivalent, \eqref{eq:induced_decomp_S} induces a graft decomposition 
of $\mathcal{FT}(\SSS')$:
\begin{equation}\label{eq:induced_decomp_Sprime}
\mathcal{FT}(\SSS')\simeq \mathcal{T}_1\dashleftarrow\mathcal{T}_2\dashleftarrow\dots\dashleftarrow \mathcal{T}_m.
\end{equation} 

\textbf{$\bullet$ The algebraic graft decomposition \eqref{eq:induced_decomp_Sprime} can be realized topologically.}
In other words, we want to show that,
after reindexing if necessary, 
the non-splittable graft decomposition of $\SSS'$:  
\[\SSS'\simeq  \SSS_1'\dashleftarrow \SSS_2'\dashleftarrow\dots\dashleftarrow  \SSS_m'\]
satisfies $\mathcal{FT}(\SSS'_i)\simeq \mathcal{T}_i$, $i=1,\dots, m$.

We prove the claim by induction on the length $m$ 
of the graft decomposition of $\SSS$.
When $m=1$, $\SSS$ and therefore $\SSS'$ are non-splittable 
by Theorem \ref{intro:teo:g_kneser_conjecture}, and 
it is the special case. 

Suppose $m>1$. We index solid parts $F_j$, $j=0,\dots,n$, of
$\SSS$ such that $j>i$ if 
$F_j$ has a greater depth than $F_i$ 
in $\Lambda_\SSS$. Consider   
\begin{equation}\label{eq:max_non_splittable_comp} 
k:=\operatorname{max}\{j\mid F_j \text{ is not irreducible }\}.
\end{equation}

\textbf{Subcase $1$:} If $\partial F_k$ contains at least one 
spherical component $\Stwo$ that does not separate $F_k$ from $\infty$,  
then $\Stwo$ bounds a $3$-ball $B$ with $\mathring{B}\cap \surface=\emptyset$. 
Up to reindinxing \eqref{eq:induced_decomp_S}, we may assume
$\SSS_m$ is the trivial pair of genus $0$ containing $\Stwo$. 
The corresponding component $\Stwo'$ of $\surface'$ 
is also a $2$-sphere, and $\SSS_m'=(\sphere,\Stwo')$ 
realizes $\mathcal{T}_m$. 
Let $\tilde{\SSS}'$ be the pair obtained by removing $\Stwo'$ 
from $\surface'$. 
Then we have the graft decomposition of $\mathcal{FT}(\tilde{\SSS}')$:
\begin{equation}\label{eq:graft_decomp_tildeS_1}
\mathcal{FT}(\tilde{\SSS}')\simeq \mathcal{T}_1\dashleftarrow\dots\dashleftarrow\mathcal{T}_{m-1}. 
\end{equation}      
By the induction hypothesis, the non-splittabe graft decomposition of 
$\tilde{\SSS'}$ realizes \eqref{eq:graft_decomp_tildeS_1};
the claim then follows from the fact that $\SSS\simeq \tilde{\SSS'}\dashleftarrow \SSS_m'$.

\textbf{Subcase $2$:} Suppose $\partial F_k$ contains 
only one spherical component $\Stwo$ 
which separates $F_k$ from $\infty$.
Let $\SSS_l$ be the trivial pair of genus $0$ 
induced from $\Stwo$. Then 
\[\SSS\simeq \tilde{\SSS}\dashleftarrow \SSS_l\dashleftarrow\overline{\SSS},\] 
where $\tilde{\SSS}$ is the pair obtained by 
removing components of $\surface$ on the side of 
$\Stwo$ opposite to $\infty$, and $\overline{\SSS}$ is the pair 
obtained by removing components of $\surface$ 
on the same side of $\Stwo$ as $\infty$.
This implies, up to reindexing, the graft decompositions: 
\begin{align*}
\mathcal{FT}(\tilde{\SSS})&\simeq \mathcal{FT}(\SSS_1)\dashleftarrow\dots\dashleftarrow
\mathcal{FT}(\SSS_{l-1})\\
\mathcal{FT}(\overline{\SSS})&\simeq \mathcal{FT}(\SSS_{l+1})
\dashleftarrow\dots\dashleftarrow \mathcal{FT}(\SSS_m).
\end{align*}

On the other hand, the corresponding $2$-sphere $\Stwo'$ in $\surface'$
realizes $\mathcal{T}_l$ and induces a graft decomposition of $\SSS'$:
\[\SSS'\simeq \tilde{\SSS}^{'}\dashleftarrow \SSS_l'\dashleftarrow \overline{\SSS}'\]
\begin{align}
&\mathcal{FT}(\tilde{\SSS}^{'})\simeq \mathcal{T}_1\dashleftarrow\dots\dashleftarrow
\mathcal{T}_{l-1}\label{eq:graft_decomp_tildeS_2}\\
\text{such that} \qquad &\mathcal{FT}(\SSS_l')\simeq \mathcal{T}_l\nonumber\\
&\mathcal{FT}(\overline{\SSS}')\simeq \mathcal{T}_{l+1}\dashleftarrow\dots\dashleftarrow \mathcal{T}_m.\label{eq:graft_decomp_overS}
\end{align}
As with Subcase $1$, the induction hypothesis implies
the claim.
  
\textbf{Subcase $3$:} Suppose $\partial F_k$ has no spherical components.
Then there exists a $2$-sphere $\Stwo\subset F_k$ inducing
a connected sum decomposition $F_k\simeq \tilde{F}_k\# \overline{F}_k$ 
such that the one, say $\overline{F}_k$, containing 
the side of $\Stwo$ 
opposite to $\infty$ is irreducible. Thus removing components of $\surface$
that are on the same side of $\Stwo$ as $\infty$ gives a non-splittable
pair, and up to reindexing, we may assume it is $\SSS_m$
in \eqref{eq:graft_decomp_of_S}.
 
Let $B$ (resp.\ $A_m$) denote the fundamental group
$\pi_1(\tilde{F}_k)$ (resp.\ $\pi_1(\overline{F}_k)$).
Then $\pi_1(F)$ is isomorphic to a non-trivial
free product $B\ast A_m$ such that, for any component
$\Sigma$ of $\partial \tilde{F}_k$ (resp.\ of $\partial \overline{F}_k$), 
the homomorphism
\[\pi_1(\Sigma)\rightarrow \pi_1(F)\]
factors through $B$ (resp.\ $A_m$). 
Via equivalence \eqref{diag:equivalent_fund_tree} 
between $\SSS$ and $\SSS'$, 
the fundamental group of the solid part  
$F_k'$ of $\SSS'$ corresponding to $F_k$ also satisfies
\[\pi_1(F_k')\simeq B\ast A_m,\]
and for any $\pi_1(\Sigma')$ in $\mathcal{T}_m$ (resp.\ in 
$\mathcal{T}_{1}\dashleftarrow\dots\dashleftarrow\mathcal{T}_{m-1}$),
the homomorphism  
\[\pi_1(\Sigma')\rightarrow \pi_1(F_k')\simeq B\ast A_m\]
factors through $A_m$ (resp.\ $B$). 
Applying Corollary \ref{cor:application_g_kneser}, we obtain a decomposition
\[F_k'\simeq \tilde{F}_k'\# \overline{F}_k' \quad\text{with}\quad  
\pi_1(\tilde{F}_k')\simeq B,\quad \pi_1(\overline{F}_k')\simeq A_m\]
such that $\Sigma'\subset \partial F_k'$ is in $\partial \tilde{F}_k'$ 
(resp.\ $\partial\overline{F}_k'$) if 
the homomorphism 
\[\pi_1(\Sigma')\rightarrow \pi_1(F_k')\]
factors through
\[\pi_1(\Sigma')\rightarrow \pi_1(\tilde{F}_k')\quad(\text{ resp. }
\pi_1(\Sigma')\rightarrow \pi_1(\overline{F}_k')\hspace*{.3em}).\]
This implies a graft decomposition
\begin{equation*}
\SSS'\simeq \tilde{\SSS}'\dashleftarrow \SSS_m'
\quad with \quad 
\mathcal{FT}(\tilde{\SSS}')\simeq \mathcal{T}_1 \dashleftarrow\dots\dashleftarrow
\mathcal{T}_{m-1} \quad and\quad 
\mathcal{FT}(\SSS_m')\simeq  \mathcal{T}_m,
\end{equation*}
where $\tilde{\SSS}'$ is the pair consisting of components
of $\surface$ that are not in $\partial \overline{F}_k'$,
and $\SSS_m'$ is the pair containing only 
components of $\partial \overline{F}_k'$. 
By Theorem \ref{intro:teo:g_kneser_conjecture}, 
$\SSS_m'$ is necessarily non-splittable, and 
the claim follows from the induction hypothesis.
 
In this way, any equivalence between
$\mathcal{FT}(\SSS)$ and $\mathcal{FT}(\SSS')$ induces an equivalence
between $\mathcal{FT}(\SSS_i)$ and $\mathcal{FT}(\SSS_i')$
for every $i$. The problem is thus reduced to Case $1$, and     
once an equivalence between $\SSS_i$ and $\SSS_i'$ 
is established for every $i$, 
we glue them together to get an equivalence
between $\SSS$ and $\SSS'$.
\end{proof}

 
It is not difficult to see that Theorem \ref{intro:main_thm} 
can be generalized to surfaces in an irreducible, oriented $3$-manifold $M$
with a base point $\infty$.
\begin{definition}
We denote by a pair $\mathcal{M}=(M,\surface)$ a surface
$\surface$ in $M\setminus \infty$ with each component of $\surface$ 
separating $M$. 
Two pairs $(M,\surface), (M,\surface')$ are equivalent if 
there exists an orientation-preserving homeomorphism
fixing $\infty$ and sending $\surface$ to $\surface'$.
Denote the set of equivalence classes of $\mathcal{M}$ by
$\mathsf{Sur}_{M}$.
\end{definition}
As with the case of $\sphere$, one 
can define a mapping $\mathcal{FT}$ from 
$\mathsf{Sur}_{M}$ to $\mathsf{Grp}_{f,p}^{\op{BD}}$. 
Replacing $\sphere$ with $M$ in Definitions \ref{def:splitting_sphere} and \ref{def:splitting_sphere} allows us to define 
the graft decomposition of $\mathcal{M}$. The proof of 
Proposition \ref{prop:nonsplit_graft_decomp} can be carried over to show 
the existence and uniqueness of the graft decomposition
of $\mathcal{M}$. Thus, by
the generalized Kneser conjecture and Waldhausen theorem,
as done in the preceding proof, we obtain the following complete invariant.
 
\begin{corollary}
Two pairs $\mathcal{M},\mathcal{M}'$ are equivalent if and only if 
$\mathcal{FT}(\mathcal{M})$ and $\mathcal{FT}(\mathcal{M}')$
are equivalent.
\end{corollary}


\section{Computation and examples}\label{sec:examples}
Here we compute invariants of handlebody links and their tunnels 
derived from the fundamental tree. 
The computation is done via the program Appcontour \cite{appcontour}.

\subsection{Example $1$}
\begin{definition}
Let $\SSS = \pair$ be the pair induced by 
a handlebody link $\HL$. Denote by $\Sigma_i$, $i=1,\dots,n$,  
components of $\surface$, and by $\mathcal{H}(\SSS)$ 
the set of homomorphisms from $\pi_1(F_0)$ to
$G$, up to automorphism of $G$.
%

An element $x$ in $\mathcal{H}(\SSS)$ 
is called proper with respect to $\Sigma_i$ 
if representing homomorphisms of $x$ are surjective, but become 
non-surjective after precomposing with the homomorphism
\[\pi_1(\Sigma_i)\rightarrow \pi_1(F_0).\]

An element $x$ in $\mathcal{H}(\SSS)$ 
is proper with respect to a subset $\mathcal{A}$ of $\{\Sigma_i\}_{i=1}^{n}$ 
if $x$ is proper with respect to every member in $\mathcal{A}$. 
The set of proper elements in $\mathcal{H}(\SSS)$ with respect to $\mathcal{A}$
is denoted by $\mathcal{PH}(\SSS)_{\mathcal{A}}$. 
\end{definition}

\begin{definition}[\textbf{$G$-images}]
Given $\mathcal{A}=\{\Sigma_{i_1},\cdots,\Sigma_{i_k}\}\subset \{\Sigma_i\}_{i=1}^{n}$,
the $G$-image of a handlebody link 
$\SSS= \pair$ with respect to $\mathcal{A}$ 
is a set of unordered $k$-tuples of 
subgroups of $G$, up to automorphism, 
parameterized by elements in $\mathcal{PH}(\SSS)_{\mathcal{A}}$ defined as follows:
\begin{equation}
G\text{-}\operatorname{im}(\SSS)_{\mathcal{A}}:=\{(H_{i_1},H_{i_2},\cdots, H_{i_k})_x\mid x\in \mathcal{PH}(\SSS)_{\mathcal{A}} \},
\end{equation}  
where $H_{i}$ in a $k$-tuple $(\cdots)_x$ 
is the image of the homomorphism
\[\operatorname{Ker}\Big(
\pi_1(\Sigma_{i})\xrightarrow{\iota_{i}} \pi_1(F_{i})\Big)< \pi_1(\Sigma_{i})
\rightarrow \pi_1(F_0)\xrightarrow{\phi} G,\]
and $\phi$ is a representative of $x$. 
The $k$-fold $G$-image of $\SSS$ is defined by  
\begin{equation}
G\text{-}\operatorname{im}(\SSS)^{k}:=\{G\text{-}\operatorname{im}(\SSS)_{\mathcal{A}} 
\mid \mathcal{A}\subset \{\Sigma_i\}_{i=1}^{n}, \vert \mathcal{A}\vert=k \}.
\end{equation} 
\end{definition}
When $k=1$, we call it the individual $G$-image of $\SSS$, and drop 
the superscript $1$. Corollary \ref{intro:invariant_handlebody_links} 
implies the $k$-fold $G$-image is an invariant of handlebody links.

We compute individual $G$-images of handlebody links
in Fig.\ \ref{fig:hl_family}. 
These handlebody links 
have the same linking number \cite{Miz:13}, and 
each of their components is a trivial handlebody knot. In addition,
they have homeomorphic complements as
$\mathrm{HL}2, \mathrm{HL}3$ in Fig.\ \ref{fig:hl_2_hl_3} can 
be obtained by twisting $\mathrm{HL}1$ along  
once-punctured annuli $A_2,A_3$ in Fig.\ \ref{fig:hl_1_annuli}, respectively 
(see \cite{LeeLee:12}, \cite{Mot:90}, or \cite[Sec.\ $4$]{BePaWa:19}
for the twist construction). Their individual $A_4$-images show that 
they are inequivalent.
 
\begin{figure}[ht]
\begin{subfigure}{0.48\textwidth}
\def\svgwidth{0.95\columnwidth}
\begingroup%
  \makeatletter%
  \providecommand\color[2][]{%
    \errmessage{(Inkscape) Color is used for the text in Inkscape, but the package 'color.sty' is not loaded}%
    \renewcommand\color[2][]{}%
  }%
  \providecommand\transparent[1]{%
    \errmessage{(Inkscape) Transparency is used (non-zero) for the text in Inkscape, but the package 'transparent.sty' is not loaded}%
    \renewcommand\transparent[1]{}%
  }%
  \providecommand\rotatebox[2]{#2}%
  \newcommand*\fsize{\dimexpr\f@size pt\relax}%
  \newcommand*\lineheight[1]{\fontsize{\fsize}{#1\fsize}\selectfont}%
  \ifx\svgwidth\undefined%
    \setlength{\unitlength}{1417.32283465bp}%
    \ifx\svgscale\undefined%
      \relax%
    \else%
      \setlength{\unitlength}{\unitlength * \real{\svgscale}}%
    \fi%
  \else%
    \setlength{\unitlength}{\svgwidth}%
  \fi%
  \global\let\svgwidth\undefined%
  \global\let\svgscale\undefined%
  \makeatother%
  \begin{picture}(1,0.6)%
    \lineheight{1}%
    \setlength\tabcolsep{0pt}%
    \put(0,0){\includegraphics[width=\unitlength,page=1]{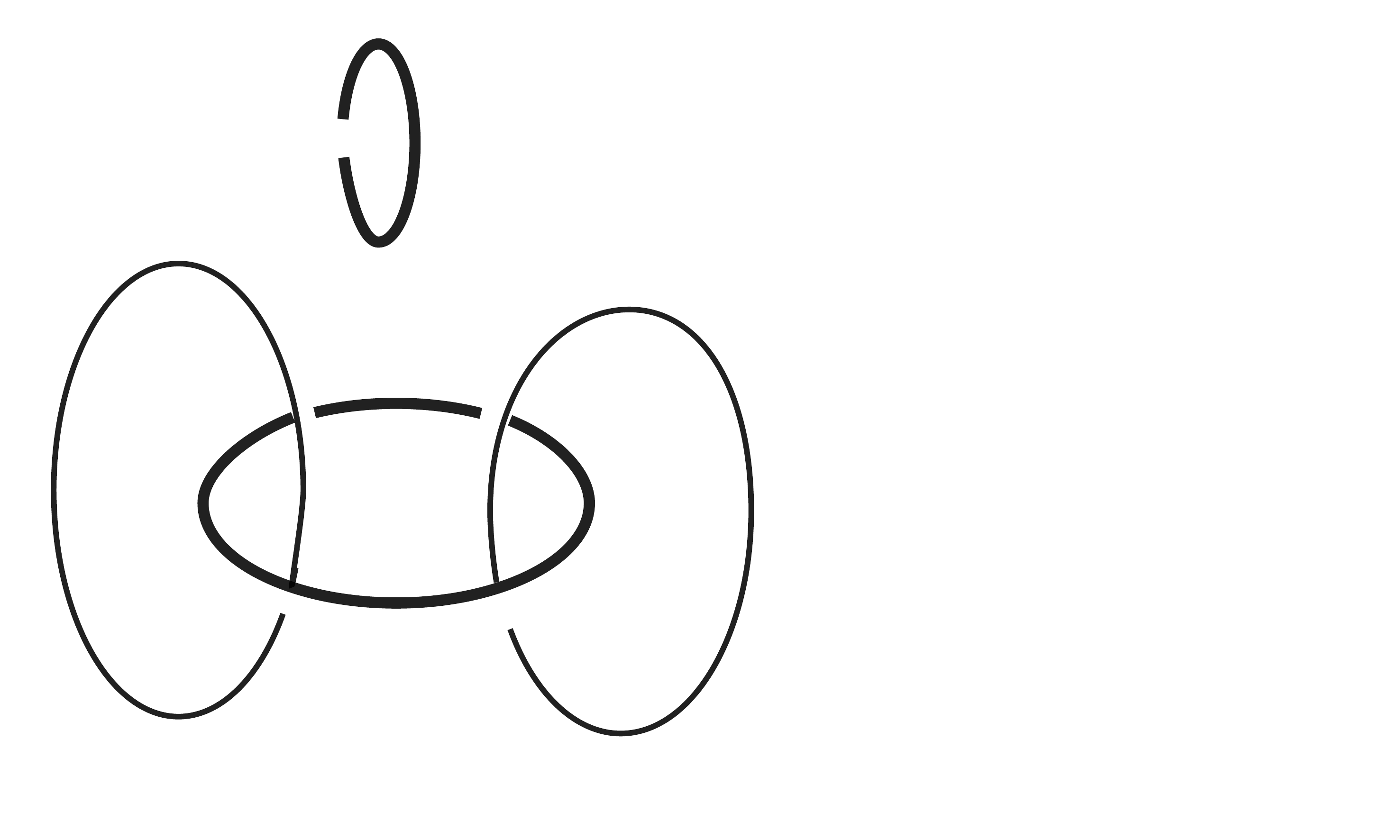}}%
    \put(0.18595079,0.01365491){\color[rgb]{0,0,0}\makebox(0,0)[lt]{\lineheight{1.25}\smash{\begin{tabular}[t]{l}{\small $\mathrm{HL}1$}\end{tabular}}}}%
    \put(0.25895309,0.19436038){\color[rgb]{0,0,0}\makebox(0,0)[lt]{\lineheight{1.25}\smash{\begin{tabular}[t]{l}{\tiny $\Sigma_1$}\end{tabular}}}}%
    \put(0.01285223,0.42459937){\color[rgb]{0,0,0}\makebox(0,0)[lt]{\lineheight{1.25}\smash{\begin{tabular}[t]{l}{\tiny $\Sigma_2$}\end{tabular}}}}%
    \put(0,0){\includegraphics[width=\unitlength,page=2]{HL_1_annuli.pdf}}%
    \put(0.91191815,0.2004097){\color[rgb]{0,0,0}\makebox(0,0)[lt]{\lineheight{1.25}\smash{\begin{tabular}[t]{l}{\tiny $A_3$}\end{tabular}}}}%
    \put(0,0){\includegraphics[width=\unitlength,page=3]{HL_1_annuli.pdf}}%
    \put(0.73659174,0.29675745){\color[rgb]{0,0,0}\makebox(0,0)[lt]{\lineheight{1.25}\smash{\begin{tabular}[t]{l}{\tiny $A_2$}\end{tabular}}}}%
    \put(0,0){\includegraphics[width=\unitlength,page=4]{HL_1_annuli.pdf}}%
  \end{picture}%
\endgroup%
 
\caption{$\op{HL}_1$ and one-punctured annuli.}
\label{fig:hl_1_annuli}
\end{subfigure}
\begin{subfigure}{0.48\textwidth}
\def\svgwidth{0.95\columnwidth}
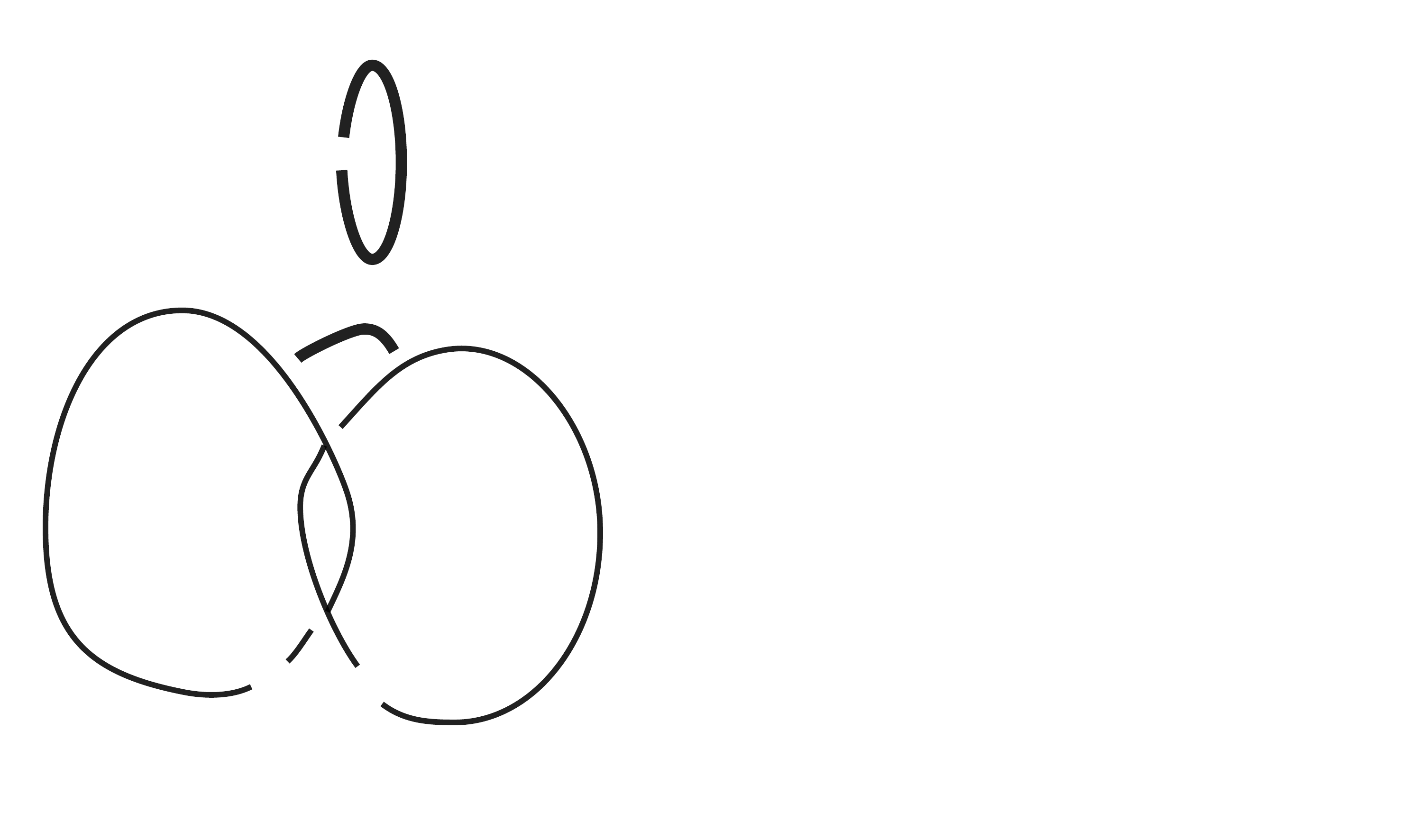 
\caption{Twisted $\op{HL}_1$.}
\label{fig:hl_2_hl_3}
\end{subfigure}
\caption{Handlebody link family.}
\label{fig:hl_family}
\end{figure}
$\Sigma_1$ in Fig.\ \ref{fig:hl_family} denotes the component 
in bold and $\Sigma_2$ the other component.  
As recorded in Table \ref{tab:ind_A_4_images_1},
there are $33$ proper homomorphisms with respect to 
$\Sigma_1$ and $33$ with respect to $\Sigma_2$. 
$18$ among them have that the image of $\pi_1(\Sigma_i)$ 
is $\mathbb{Z}_2\times \mathbb{Z}_2$ in $A_4$,
$12$ among them $\mathbb{Z}_3$, and $3$ among them 
$\mathbb{Z}_2$. 
From Table \ref{tab:ind_A_4_images_1}, we also 
see that no ambient isotopy
can swap the two components of $\operatorname{HL}1$ or of $\operatorname{HL}3$.

\begin{table}[ht]
\caption{The $A_4$-images.}
\begin{center}
\begin{tabular}{c|c|r|r|r}
    \hline
    \multicolumn{2}{c|}{}& \multicolumn{3}{c}{Image of $\pi_1(\Sigma_i)$ in $A_4$}\\
    \cline{3-5}
    \multicolumn{2}{c|}{}&$\mathbb{Z}_2 \times \mathbb{Z}_2 :18$&$\mathbb{Z}_3 :12$&$\mathbb{Z}_2 :3$\\
    \hline
    \multirow{2}{*}{$\mathrm{HL}1$}&$\Sigma_1$ & 
              $\mathbb{Z}_2 \times \mathbb{Z}_2 :8$ & 
               $\mathbb{Z}_3 :9$ & $\mathbb{Z}_2 :2$\\
              &&$\mathbb{Z}_2 :10$& $\mathbf{0}:3$ & $\mathbf{0}:1$\\
    \cline{2-5}
               &$\Sigma_2$ & 
               $\mathbb{Z}_2 \times \mathbb{Z}_2 :8$ & 
               $\mathbb{Z}_3 :12$ & $\mathbb{Z}_2 :3$\\
              &&$\mathbb{Z}_2 :9$&&  \\
              &&$\mathbf{0}:1$&&  \\      
    \hline
    \multirow{2}{*}{$\mathrm{HL}2$}& $\Sigma_1$ & 
               $\mathbb{Z}_2 \times \mathbb{Z}_2 :8$ & 
               $\mathbb{Z}_3 :12$ & $\mathbb{Z}_2 :3$\\
              &&$\mathbb{Z}_2 :9$&&  \\
              &&$\mathbf{0}:1$&&  \\
    \cline{2-5}
               &$\Sigma_2$ & 
               $\mathbb{Z}_2 \times \mathbb{Z}_2 :8$ & 
               $\mathbb{Z}_3 :12$ & $\mathbb{Z}_2 :3$\\
              &&$\mathbb{Z}_2 :9$&&  \\
              &&$\mathbf{0}:1$&&  \\      
    \hline
    \multirow{2}{*}{$\mathrm{HL}3$}  &$\Sigma_1$ & 
              $\mathbb{Z}_2 \times \mathbb{Z}_2 :8$ & 
               $\mathbb{Z}_3 :12$ & $\mathbb{Z}_2 :3$\\
              &&$\mathbb{Z}_2 :9$&&  \\
              &&$\mathbf{0}:1$&&  \\  
                 
     \cline{2-5}  
              &$\Sigma_2$ & 
              $\mathbb{Z}_2 \times \mathbb{Z}_2 :8$ & 
               $\mathbb{Z}_3 :12$ & $\mathbb{Z}_2 :2$\\
              &&$\mathbb{Z}_2 :10$&   & $\mathbf{0}:1$\\

    \hline
\end{tabular}
\end{center}
\label{tab:ind_A_4_images_1}
\end{table}

On the other hand, there are only three proper homomorphisms with respect to 
$\{\Sigma_1,\Sigma_2\}$, 
and the $2$-fold $A_4$ images of $\operatorname{HL}1$, 
$\operatorname{HL}2$ and $\operatorname{HL}3$ are recorded in Table \ref{tab:2_fold_A_4_images_1}, which shows that the $2$-fold $A_4$-image is unable to
differentiate $\operatorname{HL}1$ and $\operatorname{HL}3$. 
 
\begin{table}[ht]
\caption{The $2$-fold $A_4$-images.}
\begin{center}
\begin{tabular}{c|c}
    & $(\Sigma_1,\Sigma_2)$\\ 
    \hline
    $\operatorname{HL}1$& $\{(\mathbb{Z}_2,\mathbb{Z}_3),(\mathbb{Z}_3,\mathbb{Z}_2\times \mathbb{Z}_2),(\mathbb{Z}_3,\mathbb{Z}_3)\}$\\
    \hline
    $\operatorname{HL}2$&$\{(\mathbb{Z}_2 \times \mathbb{Z}_2 ,\mathbb{Z}_3 ),(\mathbb{Z}_3,\mathbb{Z}_2 \times \mathbb{Z}_2),(\mathbb{Z}_3 ,\mathbb{Z}_3)\}$\\
    \hline   
    $\operatorname{HL}3$&$\{(\mathbb{Z}_3 ,\mathbb{Z}_2),(\mathbb{Z}_2 \times \mathbb{Z}_2 ,\mathbb{Z}_3 ),(\mathbb{Z}_3 ,\mathbb{Z}_3)\}$\\    
    \hline
\end{tabular}
\end{center}
\label{tab:2_fold_A_4_images_1}
\end{table}

\subsection{Example 2}
Let $\gamma$ be a tunnel of a handlebody knot $\HK$.
We denote by $M_p(\gamma)$ the CW complex obtained by
attaching a $2$-disk $D$ 
to the complement $\Compl{\HK\cup\mathfrak{N}(\gamma)}$
via a degree $p$ map $\partial D\rightarrow C_\gamma$, 
where $C_\gamma$ is 
the boundary of a dual disk of $\gamma$ in
a regular neighborhood $\mathfrak{N}(\gamma)\subset \Compl\HK$.
In particular, if two tunnels $\gamma,\gamma'$
are equivalent, there is a homeomorphism between
$M_p(\gamma),M_p(\gamma')$. Denote by
$\vert\mathcal{H}(X)\vert$ the number of homomorphisms
from $\pi_1(X)$ to $A_5$, up to automorphism.

\begin{figure}[h]
\def\svgwidth{0.35\columnwidth}
\begingroup%
  \makeatletter%
  \providecommand\color[2][]{%
    \errmessage{(Inkscape) Color is used for the text in Inkscape, but the package 'color.sty' is not loaded}%
    \renewcommand\color[2][]{}%
  }%
  \providecommand\transparent[1]{%
    \errmessage{(Inkscape) Transparency is used (non-zero) for the text in Inkscape, but the package 'transparent.sty' is not loaded}%
    \renewcommand\transparent[1]{}%
  }%
  \providecommand\rotatebox[2]{#2}%
  \newcommand*\fsize{\dimexpr\f@size pt\relax}%
  \newcommand*\lineheight[1]{\fontsize{\fsize}{#1\fsize}\selectfont}%
  \ifx\svgwidth\undefined%
    \setlength{\unitlength}{1105.51181102bp}%
    \ifx\svgscale\undefined%
      \relax%
    \else%
      \setlength{\unitlength}{\unitlength * \real{\svgscale}}%
    \fi%
  \else%
    \setlength{\unitlength}{\svgwidth}%
  \fi%
  \global\let\svgwidth\undefined%
  \global\let\svgscale\undefined%
  \makeatother%
  \begin{picture}(1,0.79487179)%
    \lineheight{1}%
    \setlength\tabcolsep{0pt}%
    \put(0,0){\includegraphics[width=\unitlength,page=1]{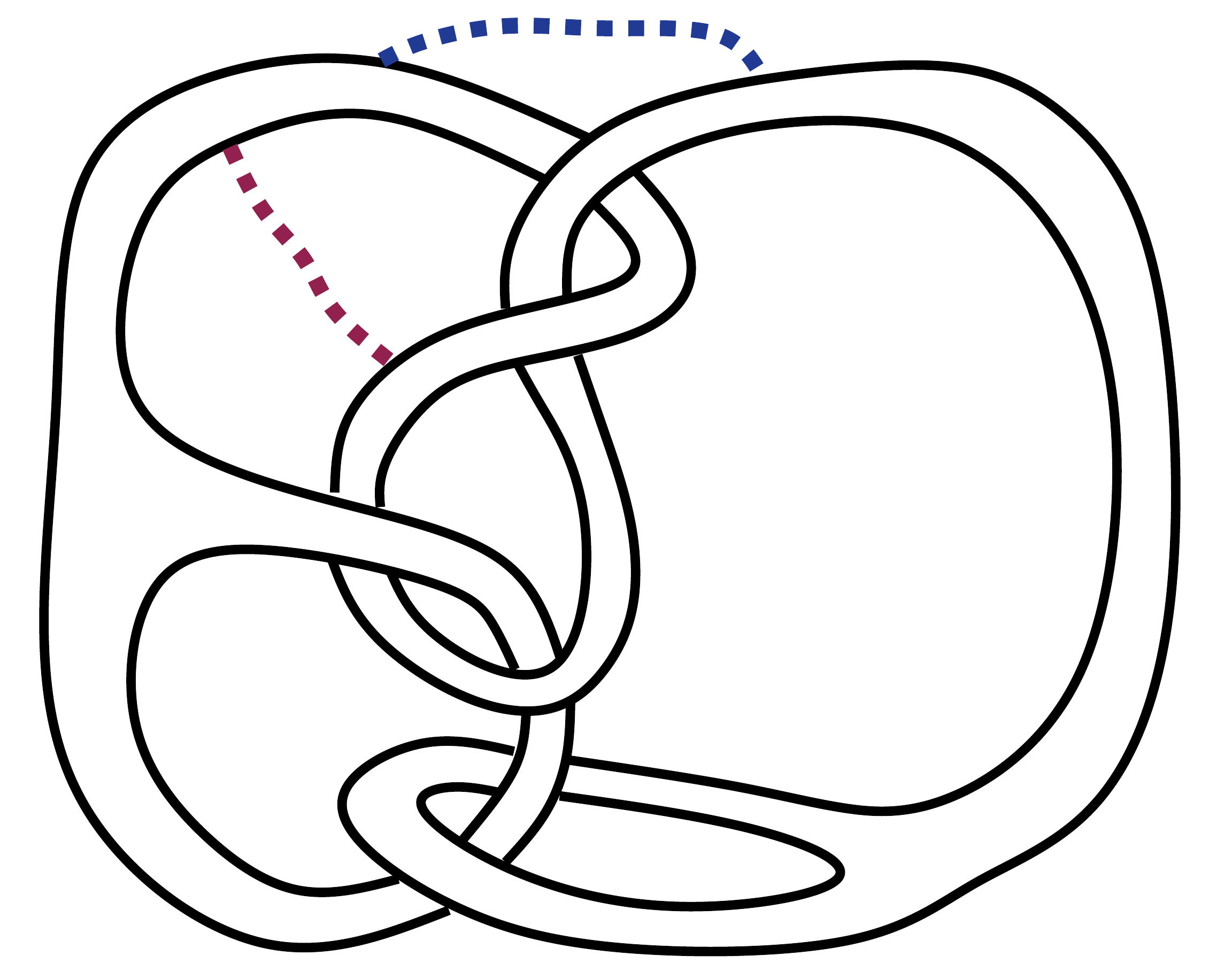}}%
    \put(0.25608444,0.60209167){\color[rgb]{0,0,0}\makebox(0,0)[lt]{\lineheight{1.25}\smash{\begin{tabular}[t]{l}{\tiny $\gamma_1$}\end{tabular}}}}%
    \put(0.42326789,0.72456516){\color[rgb]{0,0,0}\makebox(0,0)[lt]{\lineheight{1.25}\smash{\begin{tabular}[t]{l}{\tiny $\gamma_2$}\end{tabular}}}}%
  \end{picture}%
\endgroup%
 
\caption{Inequivalent tunnels of $6_{11}$.}
\label{fig:two_tunnels_hk6_11}
\end{figure}

Let $\gamma_1,\gamma_2$ in Fig.\ \ref{fig:two_tunnels_hk6_11} 
be two tunnels of the handlebody knot $6_{11}$ in \cite{IshKisMorSuz:12}.
Table \ref{tab:tunnel_preserving_homo} lists
$\vert\mathcal{H}(M_p(\gamma_i))\vert$ for different $p$'s, and 
shows that $\gamma_i$, $i=1,2$, are inquivalent tunnels.

\begin{table}[h]
\caption{Homomorphisms to $A_5$.}
\begin{center}
\begin{tabular}{c|c|c}
    & $\vert\mathcal{H}(M_p(\gamma_1))\vert$ & $\vert\mathcal{H}(\mathcal{M}_p(\gamma_2))\vert$\\ 
    \hline
    $p=2$& $616$ & $615$\\
    \hline
    $p=3$& $528$ & $507$\\
    \hline   
    $p=5$& $676$&  $698$\\    
    \hline
\end{tabular}
\end{center}
\label{tab:tunnel_preserving_homo}
\end{table}

\nada{
In the above examples, not only are the numbers of 
proper homomorphisms with respect to $\Sigma_1$ or $\Sigma_2$ 
the same, but the images of $\pi_1(\Sigma_1)$ and 
$\pi_1(\Sigma_2)$ are also identical.  
This is not true in general.
The $2$-component handlebody link in 
Fig.\ \ref{fig:surface_link_node_1}  
has $120$ proper homomorphisms with respect to $\Sigma_\beta$
and $136$ proper homomorphisms with respect to $\Sigma_\alpha$.
Table \ref{tab:ind_A_4_image_2} displays the individual
$A_4$-image of the link. 
\begin{table}  
\caption{The $A_4$-image of the link in Fig.\ \ref{fig:surface_link_node_1}}

\begin{tabular}{c|c|r|r|r|r}
    \hline
     && $\mathbb{Z}_2\times\mathbb{Z}_2$ & $\mathbb{Z}_3$
      &$\mathbb{Z}_2$& $\mathbf{0}$\\
    \hline
    \multirow{2}{*}{}&$\Sigma_\beta$ &
    $\mathbb{Z}_2\times \mathbb{Z}_2: 32$ 
              &$\mathbb{Z}_3 :36$ & 
               $\mathbb{Z}_2 :12$ & $\mathbf{0} :4$\\
               &&$\mathbb{Z}_2:36$&&&\\
               
    \cline{2-6}
               &$\Sigma_\alpha$ &
    $\mathbb{Z}_2\times \mathbb{Z}_2: 32$ 
              &$\mathbb{Z}_3 :52$ & 
               $\mathbb{Z}_2 :24$ & $\mathbf{0} :0$\\
               &&$\mathbb{Z}_2:24$&&$\mathbf{0}:4$&\\
               
    \hline 
\end{tabular}
\label{tab:ind_A_4_image_2}
\end{table}
}



\end{document}